\theoremstyle{theorem}
\newtheorem{theorem}{Theorem}
\newtheorem{lemma}[theorem]{Lemma}
\newtheorem{corollary}[theorem]{Corollary}
\newtheorem{proposition}[theorem]{Proposition}
\theoremstyle{definition}
\newtheorem{definition}{Definition}
\newtheorem{example}{Example}
\theoremstyle{remark}
\def\mA{\mathfrak{A}}
\def\mB{\mathfrak{B}}
\def\EFD{\mbox{\rm EFD}}
\def\EFB{\mbox{\rm EFB}}
\def\+{+^*}
\def\A{\mathcal{A}}
\def\P{\mathcal{P}}
\def\B{\mathcal{B}}
\def\S{\sum^{\#}_{i\in\omega}}
\def\LL{L_{\omega_1\omega}}
\def\EF{Ehrenfeucht-Fra\"{i}ss\'{e} }
\newcommand{\open}{\Bbb}
\newcommand{\oN}{{\open N}}
\renewcommand{\bf}{\textbf}
\renewcommand{\it}{\textit}
\begin{document}
\title{An Ehrenfeucht-Fra\"{i}ss\'{e} Game for $L_{\omega_1\omega}$}

\author{
Jouko V\"a\"an\"anen\thanks{Research partially supported by grant 40734 of the Academy of Finland.}\\ Department of Mathematics and Statistics\\
 University of Helsinki\\
 and\\
  Institute for Logic, Language and Computation,\\ University of Amsterdam\\
  and\\
  Tong Wang\thanks{Research supported by grant 4048 of the European Science Foundation project `New Frontiers of Infinity:  Mathematical, Philosophical and Computational Prospects'.}\\
  Institute for Logic, Language and Computation,\\ University of Amsterdam}

\maketitle

\bibliographystyle{plain}

\begin{abstract}
\EF games are very useful in studying separation and equivalence results in logic. The standard finite \EF game characterizes equivalence in first order logic. The standard \EF game in infinitary logic characterizes equivalence in $L_{\infty\omega}$. The logic $\LL$ is the extension of first order logic with countable conjunctions and disjunctions. There was no \EF game for $\LL$ in the literature.

In this paper we develop an \EF Game for $\LL$. This game is based on a game for propositional and first order logic introduced by Hella and V\"{a}\"{a}n\"{a}nen. Unlike the standard \EF games which are modeled solely after the behavior of quantifiers, this new game also takes into account the behavior of connectives in logic. We prove the adequacy theorem for this game. We also apply the new game to prove complexity results about infinite binary strings.
\end{abstract}

\section{Introduction}
One major limitation of the expressive power of first order logic is that many familiar mathematical concepts (e.g.~finiteness, connectedness of graphs, and well-foundedness) are not definable. These concepts involve, explicitly or implicitly, the notion of infinity which is difficult to handle in first order logic, partly  because first order logic only has \it{finitary} formulas. One way to transcend this boundary is to consider stronger logics where \it{infinitary} formulas are also admissible. Infinitary logics made their first appearance in print with the papers of Scott and Tarski \cite{ScottTarski58} and Tarski \cite{Tarski58}. Among all infinitary logics, the logic $\LL$ turns out to be an especially interesting example. The logic $\LL$ adds to first order logic countable\footnote{In this paper we use `countable' to mean `countably infinite'.} conjunctions and disjunctions of formulas. It is in a sense the smallest infinitary logic. It strikes a nice balance---$\LL$ is expressive enough to define many familiar mathematical concepts which are not definable in first order logic (including the three aforementioned), while it still enjoys model-theoretic properties such as having the Completeness Theorem \cite{Karp64} and the Interpolation Theorem \cite{Lopez-Escobar65}. 

Throughout this paper we assume $L$ to be a relational vocabulary. The terms of $L$ are defined as usual. 
\begin{definition} The set of $\LL$-formulas of $L$ is the smallest set closed under the following operations:
\begin{enumerate}
\item If $t_1$ and $t_2$ are terms, then $t_1=t_2$ is a formula.
\item If $R$ is an $n$-place relational symbol and $t_1,\ldots, t_n$ are terms, then $Rt_1\ldots t_n$ is a formula.
\item If $\phi$ is a formula, then so is $\neg\phi$.
\item If $\phi$ and $\psi$ are formulas, then so is $\phi\vee\psi$.
\item If $\phi$ and $\psi$ are formulas, then so is $\phi\wedge\psi$.
\item If $I$ is a countable set and for every $i\in I$, $\phi_i$ is a formula, then so is $\bigvee_{i\in I}\phi_i$.
\item If $I$ is a countable set and for every $i\in I$, $\phi_i$ is a formula, then so is $\bigwedge_{i\in I}\phi_i$.
\item If $\phi$ is a formula and $x_n$ a variable, then $\exists x_n \phi$ is a formula.
\item If $\phi$ is a formula and $x_n$ a variable, then $\forall x_n \phi$ is a formula.
\end{enumerate}
\end{definition}

Rather surprisingly, some questions in the basic model theory of $\LL$ have been left unanswered. In this paper we address one such question, namely the question of separation in $\LL$. Let us fix some notation. We use $A$, $B$ to denote $L$-structures and use $\A$, $\B$ to denote classes of $L$-structures. We say that a formula $\phi$ \it{separates} a pair of structures $A$ and $B$ if $A\models \phi$ and $B\not\models \phi$. We say that a formula $\phi$ \it{separates} two classes of structures $\mA$ and $\mB$, denoted by $(\mA, \mB)\models \phi$, if for all structures $A\in\mA$ we have $A\models \phi$ and for all structures $B\in \mB$ we have $B\not\models \phi$. Let $A\equiv B$ denote that $A$ and $B$ are elementarily equivalent. If $\alpha$ is an ordinal, let $A\equiv^\alpha_{\infty\omega} B$ denote that $A$ and $B$ satisfy the same $L_{\infty\omega}$ \footnote{The logic $L_{\infty\omega}$ is the extension of first order logic with arbitrary conjunctions and disjunctions. } sentences up to quantifier rank $\alpha$. Finally we denote by $A\equiv_{\infty\omega} B$ that $A$ and $B$ are equivalent in $L_{\infty\omega}$. Note that the notions of separation and equivalence are opposite sides of the same coin: two structures can be separated by a formula in a certain language if and only if they are not equivalent in this language. The central question that we try to answer in this paper is the following:
\begin{description}
\item[Question 1:] Given two structures $A$ and $B$, is there a formula $\phi$ in $\LL$ that separates them? More generally, given two classes of structures $\mA$ and $\mB$, is there a formula $\phi$ in $\LL$ that separates them? 
\end{description}

A very useful machinery in logic to study equivalence and separation results is the \EF game, also known as the back-and-forth game. The \EF game was first formulated for first order logic by Ehrenfeucht \cite{Ehrenfeucht57, Ehrenfeucht6061}, based on the ideas of Fra\"{i}ss\'{e} \cite{Fraisse55}. Let us denote this game by EF$_n$. For $n$ a natural number, the game EF$_n$ characterizes equivalence in first order logic up to formulas of quantifier rank $n$. Karp \cite{Karp64} developed the idea into a characterization of equivalence in infinitary logic using back-and-forth sequences, which was later formulated as a game in \cite{Benda69}. See also \cite[Chapter 7]{Vaananen11}. For $\alpha$ an ordinal, the game EFD$_\alpha$ characterizes equivalence in $L_{\infty\omega}$ up to formulas of quantifier rank $\alpha$. The logic $\LL$ is intermediate between first order logic and $L_{\infty\omega}$. It is only natural to ask: is there an \EF game that, standing somehow in between the standard games EF$_n$ and EFD$_\alpha$, characterizes equivalence in $\LL$? The question of finding such a game has been open since 1970s. 

In this paper we introduce an \EF game for $\LL$. The text is divided as follows. In Section 2 we analyze the reason why the standard game EFD$_\alpha$ fails to characterize equivalence in $\LL$. The idea is roughly that the standard game only models the behavior of quantifiers in logic, and while this is enough for studying separation in $L_{\infty\omega}$, it is not enough for $\LL$ since the latter is defined by explicit reference to the \it{countability} of conjunctions and disjunctions. An \EF game for the latter needs to model also the behavior of connectives. Fortunately there already exists an \EF game for first order logic that is in this spirit, namely the game EFB$_n$ introduced by Hella and V\"{a}\"{a}n\"{a}nen \cite{HellaVaananen}. We present this game in Section 3. In Section 4 we extend this game to cover $\LL$-formulas and we prove the adequacy theorem for the new game. In Section 5 we consider other alternatives for an \EF game for $\LL$. In Section 6 we test our new tool on the complexity of information coded by infinite binary strings.
 
\section{The Standard \EF Game}
Let us first recapitulate the standard \EF game in infinitary logic.

\begin{definition}
Let $L$ be a relational vocabulary and $A$, $B$ be $L$-structures. A partial mapping $\pi: A\rightarrow B$ is a \it{partial isomorphism}\index{partial isomorphism} if it is an isomorphism between $A\upharpoonright{\rm{dom}(\pi)}$ and $B\upharpoonright{\rm{ran}(\pi)}$.
\end{definition}

\begin{definition}
Let $L$ be a relational vocabulary and $A$, $B$ be $L$-structures. Let $\alpha$ be an ordinal. The game EFD$_\alpha(A, B)$ has two players. The number $\alpha$ is called the \it{rank} of the game. The positions in the game are of the form $(\bar{a}, \bar{b}, \gamma)$, where $\bar{a}$ and $\bar{b}$ are tuples in $A$ and $B$ respectively and $\gamma$ is an ordinal. The game begins from the position $(\emptyset, \emptyset, \alpha)$. Suppose the game is in position $(\bar{a}, \bar{b}, \gamma)$, $\gamma>0$, there are the following possibilities for the next move in the game.
\begin{enumerate}
\item Player \bf{I} picks an element $c\in A$. He also picks an ordinal $\beta<\gamma$. Player \bf{II} responds by picking an element $d\in B$. The game continues from the position $(\bar{a}c, \bar{b}d, \beta)$.
\item Player \bf{I} picks an element $d\in B$. He also picks an ordinal $\beta<\gamma$. Player \bf{II} responds by picking an element $c\in A$. The game continues from the position $(\bar{a}c, \bar{b}d, \beta)$.
\end{enumerate}
The game terminates in a position $(\bar{a}, \bar{b}, \gamma)$ if $\gamma=0$. Player \bf{II} wins if $\bar{a}\mapsto \bar{b}$ is a partial isomorphism from $A$ to $B$. Otherwise player \bf{I} wins.
\end{definition}

The following result is essentially due to Karp \cite{Karp64}, although it was originally formulated in terms of back-and-forth sequences.

\begin{theorem}[Karp]
Let $L$ be a relational vocabulary, $A$ and $B$ be $L$-structures and $\alpha$ be an ordinal. Then the following are equivalent:
\begin{enumerate}
\item $A\equiv^\alpha_{\infty\omega} B$.
\item Player \bf{II} has a winning strategy in \EFD$_\alpha(A, B)$.
\end{enumerate}
\end{theorem}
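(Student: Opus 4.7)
The plan is to prove the statement by transfinite induction on $\alpha$, after first strengthening it to a statement about positions with tuples: for any $\bar{a}$ in $A$ and $\bar{b}$ in $B$ of equal length, Player \textbf{II} has a winning strategy in $\EFD_\alpha$ starting from $(\bar{a},\bar{b},\alpha)$ if and only if $(A,\bar{a})$ and $(B,\bar{b})$ agree on every $L_{\infty\omega}$-formula $\phi(\bar{x})$ of quantifier rank at most $\alpha$. The base case $\alpha=0$ is immediate: the game terminates at once, and the winning condition—that $\bar{a}\mapsto\bar{b}$ is a partial isomorphism—coincides with agreement on all quantifier-free formulas.

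For the direction from a winning strategy to equivalence, I would fix a winning strategy $\sigma$ for \textbf{II} and argue by induction on the $L_{\infty\omega}$-formula $\phi(\bar{x})$ of quantifier rank at most $\alpha$. Atomic cases follow from the partial-isomorphism condition (applied via the strategy). Boolean and (countable or arbitrary) infinitary connectives are handled transparently, since they do not change quantifier rank. For $\phi(\bar{x})=\exists y\,\psi(\bar{x},y)$ with $\psi$ of rank $\beta<\alpha$, if $A\models\phi(\bar{a})$ witnessed by some $c\in A$, let \textbf{I} play $(c,\beta)$; the strategy $\sigma$ provides $d\in B$ such that \textbf{II} still wins from $(\bar{a}c,\bar{b}d,\beta)$, and by the inductive hypothesis $B\models\psi(\bar{b},d)$, hence $B\models\phi(\bar{b})$. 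The symmetric case is analogous.

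For the converse, I would build \textbf{II}'s strategy directly from the hypothesis. Suppose the position is $(\bar{a},\bar{b},\gamma)$ with $\gamma\leq\alpha$ and $(A,\bar{a})\equiv^\gamma_{\infty\omega}(B,\bar{b})$, and \textbf{I} plays $c\in A$ and $\beta<\gamma$. I need to exhibit $d\in B$ with $(A,\bar{a}c)\equiv^\beta_{\infty\omega}(B,\bar{b}d)$; by the inductive hypothesis, \textbf{II} can then continue winning. Suppose no such $d$ exists. Then for every $d\in B$ pick a formula $\phi_d(\bar{x},y)$ of rank at most $\beta$ with $A\models\phi_d(\bar{a},c)$ and $B\not\models\phi_d(\bar{b},d)$, and set $\phi(\bar{x},y)=\bigwedge_{d\in B}\phi_d(\bar{x},y)$. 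Then $\exists y\,\phi(\bar{x},y)$ has rank at most $\beta+1\leq\gamma$, holds in $(A,\bar{a})$ witnessed by $c$, and fails in $(B,\bar{b})$, contradicting $(A,\bar{a})\equiv^\gamma_{\infty\omega}(B,\bar{b})$. The case when \textbf{I} plays in $B$ is symmetric.

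The main conceptual obstacle—and the one that motivates the rest of the paper—is precisely the infinitary conjunction $\bigwedge_{d\in B}\phi_d$ used in the converse direction: it may have uncountably many conjuncts and thus lives in $L_{\infty\omega}$ but not necessarily in $L_{\omega_1\omega}$. Once this is accepted, the only routine points to handle carefully are the bookkeeping of ranks at limit stages (which is painless because \textbf{I} may choose any $\beta<\gamma$) and the standard verification that $\phi\mapsto\neg\phi$, $\bigvee_i\phi_i$, and $\forall x\,\phi$ preserve the inductive equivalence.
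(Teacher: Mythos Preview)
The paper does not supply a proof of this theorem: it is quoted as a classical result of Karp (originally phrased in terms of back-and-forth sequences) and used without argument. Your proposal is the standard proof and is correct; the strengthening to positions with parameters, the formula induction for one direction, and the contradiction via the large conjunction $\bigwedge_{d\in B}\phi_d$ for the other are exactly the usual moves. Your closing observation that this conjunction need not lie in $L_{\omega_1\omega}$ is also on point and is precisely the issue the paper goes on to address.
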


The game EFD$_\alpha$ provides important information about our Question 1. Make the following simple observation.

\begin{proposition}\label{p2}
For any formula $\phi\in \LL$, the quantifier rank of $\phi$ is a countable ordinal.
\end{proposition}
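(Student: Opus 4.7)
The plan is a straightforward induction on the construction of $\LL$-formulas, following the inductive definition of quantifier rank. First I would recall that quantifier rank is defined by: atomic formulas have rank $0$; $\text{qr}(\neg\phi)=\text{qr}(\phi)$; $\text{qr}(\phi\vee\psi)=\text{qr}(\phi\wedge\psi)=\max(\text{qr}(\phi),\text{qr}(\psi))$; $\text{qr}(\bigvee_{i\in I}\phi_i)=\text{qr}(\bigwedge_{i\in I}\phi_i)=\sup_{i\in I}\text{qr}(\phi_i)$; and $\text{qr}(\exists x\,\phi)=\text{qr}(\forall x\,\phi)=\text{qr}(\phi)+1$.

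The induction hypothesis is that every proper subformula has countable quantifier rank. The cases of atomic formulas, negation, binary connectives, and quantification are essentially trivial: $0$ is countable, the maximum of two countable ordinals is countable, and the successor of a countable ordinal is countable. The only genuine content is in the countable conjunction/disjunction clauses. There I need the standard set-theoretic fact that the supremum of a countable family of countable ordinals is itself countable, which follows because the union of countably many countable sets of ordinals is countable, so its order type is a countable ordinal bounded by $\omega_1$.

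The main (and only) obstacle is making sure that the use of the supremum really stays below $\omega_1$; this is exactly where the choice of countable (as opposed to arbitrary) index sets in clauses 6 and 7 of the definition of $\LL$ is used, and where the contrast with $L_{\infty\omega}$ appears. Once this point is isolated, the induction closes immediately and the proposition follows.
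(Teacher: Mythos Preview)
Your proposal is correct and is the natural way to prove the proposition. The paper itself does not supply a proof at all: it introduces the proposition merely as a ``simple observation'' and moves on, so your induction argument is exactly the kind of routine verification the authors are leaving to the reader.
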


\begin{corollary}\label{c3}
If player \bf{II} has a winning strategy in $\mathrm{EFD}_{\omega_1}(A, B)$, then there is no formula in $\LL$ that separates $A$ and $B$.
\end{corollary}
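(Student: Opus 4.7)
The plan is to prove this by contradiction, using Karp's theorem together with Proposition \ref{p2} essentially as black boxes. Suppose player \textbf{II} has a winning strategy in $\EFD_{\omega_1}(A,B)$, and suppose for contradiction that some formula $\phi \in \LL$ separates $A$ and $B$, meaning $A \models \phi$ but $B \not\models \phi$.

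First, I invoke Proposition \ref{p2} to fix a countable ordinal $\alpha < \omega_1$ such that the quantifier rank of $\phi$ is $\alpha$. Next, I want to show that player \textbf{II}'s winning strategy in $\EFD_{\omega_1}(A,B)$ implies she also has a winning strategy in $\EFD_\alpha(A,B)$. I would argue this by a monotonicity observation: a winning strategy from the initial position $(\emptyset,\emptyset,\omega_1)$ gives, in particular, a winning response whenever player \textbf{I}'s first move picks the ordinal $\alpha$, and from then on the play is indistinguishable from a play of $\EFD_\alpha$ starting at $(\emptyset,\emptyset,\alpha)$ — so player \textbf{II} can use the same strategy, shifted down by one move, to win $\EFD_\alpha(A,B)$. (Equivalently, one can simply compose Karp's theorem with the obvious fact that satisfying the same $L_{\infty\omega}$ sentences up to rank $\omega_1$ trivially implies doing so up to rank $\alpha$, for any $\alpha \le \omega_1$.)

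Applying Karp's theorem to the game $\EFD_\alpha(A,B)$ now yields $A \equiv^\alpha_{\infty\omega} B$. Since every $\LL$-formula is an $L_{\infty\omega}$-formula and $\phi$ has quantifier rank $\alpha$, this forces $A \models \phi \Leftrightarrow B \models \phi$, contradicting the assumption that $\phi$ separates them.

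There is no real obstacle here — the proof is essentially a bookkeeping exercise that lines up the two prerequisite results. The only point requiring any care is the monotonicity step (winning $\EFD_{\omega_1}$ entails winning $\EFD_\alpha$ for each $\alpha<\omega_1$), and even this can be bypassed by routing the argument through $L_{\infty\omega}$-equivalence via Karp's theorem, where monotonicity in the rank is immediate from the definition of $\equiv^\alpha_{\infty\omega}$.
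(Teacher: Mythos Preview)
Your proposal is correct and is precisely the argument the paper intends: Corollary~\ref{c3} is stated there without proof, as an immediate consequence of Proposition~\ref{p2} and Karp's theorem, and you have simply written out those details. (The game-theoretic monotonicity step is phrased a bit loosely---no ``shift by one move'' is needed, since every play of $\EFD_\alpha$ is already a legal play of $\EFD_{\omega_1}$ and player~\textbf{II} can reuse her strategy verbatim---but you already supply the cleaner alternative via $\equiv^{\omega_1}_{\infty\omega}$, which is the route the paper implicitly has in mind.)
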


The standard \EF games EF$_n$ and EFD$_\alpha$ characterize a hierarchy of equivalence relations. They provide in a sense upper bounds and lower bounds for Question 1. The situation can be illustrated by Figure \ref{f1}.

\begin{figure}[h]
  \centering
  \includegraphics[width=6cm, height=5cm]{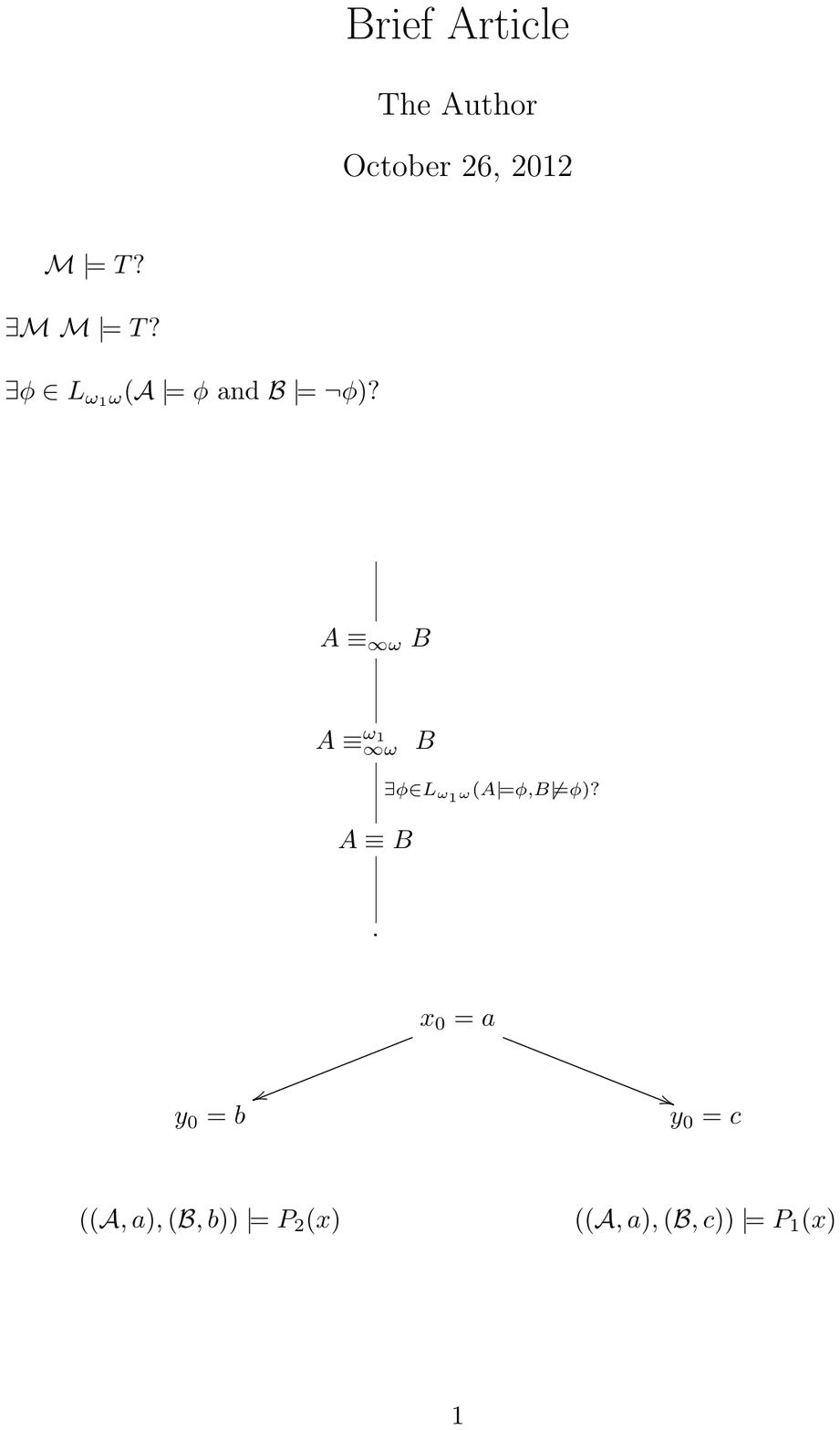}
  \caption{Hierarchy of equivalence relations}
  \label{f1}
\end{figure}

If two structures $A$ and $B$ are not even elementarily equivalent, then obviously there is an $\LL$-formula separating the two because this formulas is already in first order logic. On the other hand, if $A$ and $B$ are $L_{\infty\omega}$-equivalent, then there is no hope of finding an $\LL$-formula separating the two structures. Corollary \ref{c3} shows that we can improve the upper bound: if two structures are equivalent in $L_{\infty\omega}$ up to formulas of quantifier rank $\omega_1$, then they cannot be separated by any formula in $\LL$.

Therefore, from the point of view of $\LL$ all the excitement concentrates in the interval between $A\equiv B$ and $A\equiv^{\omega_1}_{\infty\omega}B$. However the game EFD$_\alpha$ is not able to tell us much about what happens in this interval. The reason is that although a small number of connectives implies a small quantifier rank (as in Proposition \ref{p2}), there is no implication in the reverse direction. We may very well have a separation formula of very low quantifier rank, yet it contains a large number of conjunctions and disjunctions, and the standard game cannot detect the existence of such a formula. In particular, the standard game does not answer the question whether there is a quantifier-free formula in $\LL$ separating two structures, and still, the elimination of quantifiers in favor of connectives is the oldest application of infinitary logic.\footnote{According to Barwise \cite{Barwise81}, Charles Peirce thought of quantifiers as infinite conjunctions or disjunctions, and this was picked up  by L\"owenheim, Wittgenstein and others, and used in proof theory by  Novikoff already in the 1940s.}

The failure of the standard \EF game in characterizing equivalence in $\LL$ can be understood in the following way. An \EF game, meaning a set of rules and winning conditions, corresponds to a way of measuring the complexity of formulas. If a game on the one hand, and a complexity measure on the other hand, form a good match, we would have the adequacy theorem for this game. In the case of the game EFD$_\alpha$, the complexity of a formula is measured by its quantifier rank. The two types of moves in the game EFD$_\alpha$ model the nature of existential and universal quantifiers respectively. The logic $\LL$, however, is defined by reference to the countability of conjunctions and disjunctions, and quantifier rank simply does not capture this information. Therefore there is a sense in saying that the game EFD$_\alpha$ does not solve the question of separation in $\LL$ because it measures the complexity of $\LL$-formulas in an unsuitable way. 

In this light, we need two things in order to develop an \EF game for $\LL$: (a) a new game and (b) a new measure  of the complexity of infinitary formulas. These two components have to be compatible so that we can have the adequacy theorem. Moreover, both of them should take into account not only the behavior of quantifiers, but also of connectives. 

\section{A Game for First Order Logic}
What is a suitable complexity measure for $\LL$-formulas then? We draw inspiration from the game EFB$_n$, proposed by Hella and V\"{a}\"{a}n\"{a}nen in \cite{HellaVaananen} as a refinement of the standard \EF game for first order logic. This game resembles the \EF game for independence friendly logic, presented in \cite{Vaananen02}. An innovative feature of the game EFB$_n$ is that it measures the complexity of a formula by making use of our intuition about the length of a formula. 

Let $L$ be a relational vocabulary as usual. Henceforth we shall assume all formulas to be in negation normal form. 
\begin{definition}\label{d5}
The \it{size}\index{size}, denoted by $\mathrm{s}(\phi)$, of a formula $\phi$ in first order logic is defined inductively as follows:
\begin{displaymath}
\begin{aligned}
\mathrm{s}(\phi)&=1 \textrm{ if } \phi \textrm{ is an atomic or negated atomic formula},\\
\mathrm{s}(\phi \wedge \psi)&= \mathrm{s}(\phi)+\mathrm{s}(\psi),\\
\mathrm{s}(\phi\vee \psi)&=\mathrm{s}(\phi)+\mathrm{s}(\psi),\\
\mathrm{s}(\exists x \phi)&=\mathrm{s}(\phi)+1,\\
\mathrm{s}(\forall x \phi)&= \mathrm{s}(\phi)+1.
\end{aligned}
\end{displaymath}
\end{definition}
The size of a first order formula is a positive integer. It is always bigger or equal to the quantifier rank.

To define the game EFB$_n$ we need some notation. We use $x_j, j\in \oN$ to denote variables. A variable assignment for a structure $A$ is a finite partial mapping $\alpha: \oN\rightarrow A$. The finite domain of $\alpha$ is denoted by dom$(\alpha)$. The game EFB$_n$ is defined on pairs of classes of structures rather than on pairs of structures. We consider classes $\mA$ of structures $(A, \alpha)$, where $A$ is a model and $\alpha$ is a variable assignment on $A$. We assume that whenever $(A, \alpha), (B, \beta)\in \mA$, then $A$ and $B$ have the same vocabulary, and $\alpha$ and $\beta$ have the same domain, which we denote by  dom$(\mA)$. If $\alpha$ is an assignment on $A$, $a\in A$ and $j\in \oN$, then $\alpha(a/j)$ is the assignment that maps $j$ to $a$ and agrees with $\alpha$ otherwise. If $F$ is a choice function on $\mA$, namely that $F$ is a function defined on $\mA$ such that $F(A, \alpha)\in A$ for all $(A, \alpha)\in \mA$, then $\mA(F/j)$ is defined as $\{(A, \alpha(F(A, \alpha)/j)|(A, \alpha)\in \mA\}$. Finally, $\mA(\star/j)=\{(A, \alpha(a/j))|(A, \alpha)\in \mA, a\in A)\}$.

\begin{definition}
Let $L$ be a relational vocabulary and $\mA_0$, $\mB_0$ classes of $L$-structures. Let $n$ be a positive integer. The game \EFB$_n(\mA_0, \mB_0)$\index{EFB$_n$} has two players. The number $n$ is called the \it{rank}\index{rank} of the game. The positions in the game are of the form $(\mA, \mB, m)$ where $\mA, \mB$ are classes of $L$-structures and $m\in \oN$. The game begins from position $(\mA_0, \mB_0, n)$. Suppose the game is in position $(\mA, \mB, m)$. There are the following possibilities for the next move in the game.

\begin{description}
\item[Left splitting move:]Player \bf{I} first represents $\mA$ as a union $\mA_1\cup \mA_2$. He also chooses positive numbers $m_1$ and $m_2$ such that $m_1+m_2=m$. Now the game continues from the position $(\mA_1, \mB, m_1)$ or from the position $(\mA_2, \mB, m_2)$, and player \bf{II} can choose which.

\item[Right splitting move:]Player \bf{II} first represents $\mB$ as a union $\mB_1\cup \mB_2$. He also chooses positive numbers $m_1$ and $m_2$ such that $m_1+m_2=m$. Now the game continues from the position $(\mA, \mB_1, m_1)$ or from the position $(\mA, \mB_2, m_2)$, and player \bf{II} can choose which.

\item[Left supplementing move:] Player \bf{I} picks an element from each structure $(A, \alpha)\in \mA$. More precisely, \bf{I} chooses a natural number $j$ and a choice function $F$ for $\mA$. Then the game continues from the position\newline $(\mA(F/j), \mB(\star/j), m-1)$.

\item[Right supplementing move:]Player \bf{I} picks an element from each structure $(B, \beta)\in \mB$. More precisely, \bf{I} chooses a natural number $j$ and a choice function $F$ for $\mB$. Then the game continues from the position\newline $(\mA(\star/j), \mB(F/j), m-1)$.
\end{description}
The game ends in a position $(\mA, \mB, m)$ and player \bf{I} wins if there is an atomic or a negated atomic formula $\phi$ such that $(\mA, \mB)\models \phi$. Player \bf{II} wins the game if they reach a position $(\mA, \mB, m)$ such that $m=1$ and \bf{I} does not win in this position.
\end{definition}

The game EFB$_n$ characterizes separation in first order logic up to formulas of size $n$:

\begin{theorem}[Hella-V\"{a}\"{a}n\"{a}nen]\label{10}
Let $L$ be a relational vocabulary, $\mA$ and $\mB$ be classes of $L$-structures, and let $n$ be a positive integer. Then the following are equivalent:
\begin{enumerate}
\item Player \bf{I} has a winning strategy in the game \EFB$_n(\mA, \mB)$.
\item There is a first order $L$-formula of size $\leq n$ such that $(\mA, \mB)\models \phi$.
\end{enumerate}
\end{theorem}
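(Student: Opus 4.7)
The plan is to prove both directions by induction, exploiting a tight correspondence between the four move types of Player \textbf{I} and the logical constructors of a separating formula: left/right splitting moves correspond to $\vee$ and $\wedge$, while left/right supplementing moves correspond to $\exists$ and $\forall$. Because formulas are in negation normal form, negation is absorbed into the atomic base case and the terminal winning condition.

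For $(2) \Rightarrow (1)$, I would induct on the structure of a separating formula $\phi$ with $\mathrm{s}(\phi) \leq n$. The atomic and negated atomic case matches the terminal winning condition directly. If $\phi = \phi_1 \vee \phi_2$, partition $\mA = \mA_1 \cup \mA_2$ according to which disjunct holds in each $(A, \alpha)$; then each $\phi_i$ separates $(\mA_i, \mB)$ with size $\mathrm{s}(\phi_i)$, so Player \textbf{I} plays a left splitting move with these pieces and invokes the induction hypothesis on whichever branch Player \textbf{II} selects. The case $\phi = \phi_1 \wedge \phi_2$ is dual, partitioning $\mB$ according to which conjunct fails. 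For $\phi = \exists x_j \psi$, pick for each $(A, \alpha) \in \mA$ a witness $F(A, \alpha) \in A$ with $(A, \alpha(F(A, \alpha)/j)) \models \psi$; since no extension on the $\mB$ side by an element at $j$ can satisfy $\psi$, the formula $\psi$ separates $(\mA(F/j), \mB(\star/j))$ with size $\mathrm{s}(\psi) \leq n - 1$, so Player \textbf{I} wins via the left supplementing move with this $F$ and induction. The case $\phi = \forall x_j \psi$ is dual via a right supplementing move.

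For $(1) \Rightarrow (2)$, I would induct on $n$. If $n = 1$ the game terminates immediately and Player \textbf{I} can win only by producing a separating atomic or negated atomic formula, which is the desired formula of size $1$. For $n > 1$, fix a winning strategy for Player \textbf{I} and inspect its first move. A left splitting move with $\mA = \mA_1 \cup \mA_2$ and $m_1 + m_2 = n$ commits Player \textbf{I} to winning both subgames $(\mA_i, \mB, m_i)$, since Player \textbf{II} selects the branch; the induction hypothesis supplies formulas $\phi_i$ of size $\leq m_i$ separating each $(\mA_i, \mB)$, and $\phi_1 \vee \phi_2$ separates $(\mA, \mB)$ with size $\leq n$. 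The right splitting case is analogous and yields a conjunction. A left supplementing move with data $j, F$ gives by induction a formula $\psi$ of size $\leq n - 1$ separating $(\mA(F/j), \mB(\star/j))$, and then $\exists x_j \psi$ separates $(\mA, \mB)$ with size $\leq n$; the right supplementing move dually produces a universal formula.

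The main obstacle, beyond routine bookkeeping of assignment domains and free variables, is verifying that each move type genuinely preserves separation in the required sense --- most notably that in the existential case the uniform extension $\mB(\star/j)$ on the right encodes exactly the condition ``no $(B, \beta) \in \mB$ satisfies $\exists x_j \psi$'', and dually for $\forall$. Once those equivalences are checked, the two inductions slot together to give the stated adequacy of \EFB$_n$.
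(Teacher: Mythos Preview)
Your proposal is correct and follows essentially the same approach as the paper: the paper postpones the proof of this theorem and instead proves the more general adequacy theorem for \EFB$_\alpha$ over $\LL$, from which this result is the finite special case, but the proof there is exactly your two inductions with the same move-to-connective correspondence (left/right splitting $\leftrightarrow$ $\vee$/$\wedge$, left/right supplementing $\leftrightarrow$ $\exists$/$\forall$). The only cosmetic difference is that the paper organizes both directions as a single induction on the rank, whereas you phrase $(2)\Rightarrow(1)$ as structural induction on $\phi$; since proper subformulas have strictly smaller size these amount to the same thing.
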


The proof of this theorem is postponed to the next section, where we prove the adequacy theorem for the generalized game. For now the interested reader is referred to \cite{HellaVaananen}. 



\section{Extending the Game to $L_{\omega_1\omega}$}
The game EFB$_n$ sets up a good model in our quest. Our next step is to extend this game to $\LL$. Again, there are two things that we need to do: a) modifying the rules of the game, and b) extending the definition of size to cover $\LL$-formulas. We shall start from the latter.

At a first glance, the most straightforward way of extending Definition \ref{d5} to infinitary formulas is to leave the clauses for atomic formulas, quantifiers, finite conjunctions and disjunctions unchanged, and define the size of a countable conjunction or disjunction as the supremum of the size of its proper subformulas. However this approach leads to undesirable consequences. Here we encounter the interesting phenomenon that the arithmetic of infinite ordinals behave differently from the finite case. For one thing, addition of infinite ordinals is not commutative. Consider for example a formula $\phi$ with $\mathrm{s}(\phi)=\omega$ and a formula $\psi$ with $\mathrm{s}(\psi)=1$. We would have that $\mathrm{s}(\phi\wedge\psi)=\omega+1$, which is not equal to $\mathrm{s}(\psi\wedge\phi)=1+\omega=\omega$. This is counterintuitive. Moreover, in this example the formula $\psi\wedge \phi$ has the same size as its proper subformula $\phi$, a fact that is fatal to the induction proof of the adequacy theorem. We need a new way of adding up ordinals. 

The operation that we turn to is the \it{natural sum}. The natural sum is defined using the Cantor normal form of ordinals. 

\begin{definition}
Let $\gamma_1$ and $\gamma_2$ be ordinals. One can represent $\gamma_1$ and $\gamma_2$ uniquely in the form
\[\gamma_1=\omega^{\alpha_1}\cdot k_1+\ldots+\omega^{\alpha_n}\cdot k_n
\]
\[\gamma_2=\omega^{\alpha_1}\cdot j_1+\ldots+\omega^{\alpha_n}\cdot j_n
\]
where $\alpha_1>\ldots>\alpha_n$ is a sequence of ordinals, $k_1,\ldots, k_n$ and $j_1,\ldots, j_n$ are natural numbers satisfying $k_i+j_i>0$ for all $i$. 
Define the \it{natural sum} of $\gamma_1$ and $\gamma_2$, denoted by $\gamma_1\#\gamma_2$, of $\gamma_1$ and $\gamma_2$ as
\[\gamma_1\#\gamma_2=\omega^{\alpha_1}\cdot(k_1+j_1)+\ldots+\omega^{\alpha_n}\cdot(k_n+j_n).\]
\end{definition}

The natural sum is commutative and associative. We also introduce the natural sum for countable sequences of ordinals.

\begin{definition}
Let $\{\gamma_i|i\in \omega\}$ be a sequence of ordinals. Let $S_n$ denote the natural sum of the first $n$ items in the sequence:
\[S_n=\gamma_0\#\ldots\#\gamma_{n-1}.\]
Define the \it{infinite natural sum}\index{natural sum!infinite}, denoted by $\S \gamma_i$, of the sequence $\{\gamma_i|i\in \omega\}$ as
\[\S\gamma_i=\sup_{n\in\omega} S_n.\]
\end{definition}

We are now ready to define the notion of size for $\LL$-formulas. Again assume all formulas to be in negation normal form.

\begin{definition}\label{d8}
The \it{size}\index{size}, denoted by $\mathrm{s}(\phi)$, of a formula $\phi$ in $\LL$ is defined inductively as follows:
\begin{displaymath}
\begin{aligned}
\mathrm{s}(\phi)&=1 \textrm{ if } \phi \textrm{ is an atomic or negated atomic formula},\\
\mathrm{s}(\phi \wedge \psi)&= \mathrm{s}(\phi)\#\mathrm{s}(\psi),\\
\mathrm{s}(\phi\vee \psi)&=\mathrm{s}(\phi)\#\mathrm{s}(\psi),\\
\mathrm{s}(\exists x \phi)&=\mathrm{s}(\phi)+1,\\
\mathrm{s}(\forall x \phi)&= \mathrm{s}(\phi)+1,\\
\mathrm{s}(\bigwedge_{i\in \omega} \phi_i)&=\S \mathrm{s}(\phi_i),\\
\mathrm{s}(\bigvee_{i\in \omega} \phi_i)&=\S \mathrm{s}(\phi_i).
\end{aligned}
\end{displaymath}
\end{definition}

If $I$ is any countable set and for each $i\in I$, $\phi_i$ is a formula, we define $\mathrm{s}(\bigwedge_{i\in I} \phi_i)$ to be $\mathrm{s}(\bigwedge_{i\in\omega}\phi_{p(i)})$, where $p$ is a bijection from $\omega$ to $I$, and similarly for disjunction.  In the next proposition we show that this definition does not depend on the choice of $p$. The size of an $\LL$-formula is always a countable ordinal. The following are straightforward properties of size.

\begin{proposition}\label{p5}
\begin{enumerate}
\item If $\phi$ is a first order formula, then Definition \ref{d5} and Definition \ref{d8} assign it the same size.
\item If $\phi$ is a proper subformula of $\psi$, then $\mathrm{s}(\phi)<\mathrm{s}(\psi)$.
\item The size of a conjunction is invariant under permutations of the conjuncts. More precisely, if $\phi$ and $\psi$ are formulas then $\mathrm{s}(\phi\wedge\psi)=\mathrm{s}(\psi\wedge\phi)$. If $\phi_i, i\in \omega$ is a sequence of formulas and $p$ is a permutation of $\omega$, then $\mathrm{s}(\bigwedge_{i\in\omega}\phi_i)=\mathrm{s}(\bigwedge_{i\in\omega}\phi_{p(i)})$. The same applies to disjunctions. 
\end{enumerate}
\end{proposition}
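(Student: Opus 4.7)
My plan is to handle all three items by structural induction on formulas, leaning on three arithmetic facts about the natural sum: (i) on natural numbers $m\#n=m+n$; (ii) $\alpha\#\beta>\alpha$ whenever $\beta\geq 1$; and (iii) $\#$ is commutative and associative. For Part (1), I would run a routine structural induction on a first order $\phi$. The atomic, quantifier, and finite conjunction/disjunction clauses of Definitions~\ref{d5} and~\ref{d8} agree, except that Definition~\ref{d8} writes $\#$ in place of $+$; since all subformula sizes are natural numbers by induction, (i) closes the gap.

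For Part (2), I would again induct on $\psi$. Every formula has size at least $1$, so the quantifier cases are immediate and the binary conjunction/disjunction case reduces to (ii). The clause that wants a little care is the countable conjunction $\psi=\bigwedge_{i\in\omega}\phi_i$: given any $j$, I want $\mathrm{s}(\phi_j)<\S \mathrm{s}(\phi_i)$. Writing $S_n$ for the partial natural sums, I note that $S_{j+1}\geq \mathrm{s}(\phi_j)$ because $\mathrm{s}(\phi_j)$ is one of its summands, and $S_{j+2}=S_{j+1}\#\mathrm{s}(\phi_{j+1})>S_{j+1}$ by (ii) applied with $\beta=\mathrm{s}(\phi_{j+1})\geq 1$. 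Hence the supremum strictly exceeds $\mathrm{s}(\phi_j)$.

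Part (3) is where the main obstacle lies. The binary case $\mathrm{s}(\phi\wedge\psi)=\mathrm{s}(\psi\wedge\phi)$ is just the commutativity in (iii). The countable case, by contrast, is not immediate from commutativity and associativity alone, because a supremum of partial sums could in principle be sensitive to the enumeration. My plan is to set $\gamma_i=\mathrm{s}(\phi_i)$ and $\delta_i=\gamma_{p(i)}$, with partial sums $S_n^\gamma$ and $S_n^\delta$, and to show $\sup_n S_n^\gamma=\sup_n S_n^\delta$ by dominating each partial sum on one side by a partial sum on the other. Given $n$, I choose $m=1+\max\{p^{-1}(i):i<n\}$ so that the multiset $\{\delta_0,\ldots,\delta_{m-1}\}$ contains $\{\gamma_0,\ldots,\gamma_{n-1}\}$; by (iii) I regroup $S_m^\delta=S_n^\gamma\#T$ for some ordinal $T$, yielding $S_n^\gamma\leq S_m^\delta$. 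The symmetric inequality completes the argument. The well-definedness of $\mathrm{s}(\bigwedge_{i\in I}\phi_i)$ for countable $I$ then drops out: two bijections $p,q\colon\omega\to I$ differ by a permutation $\sigma=p^{-1}\circ q$ of $\omega$, and Part (3) applied to $\psi_j:=\phi_{p(j)}$ gives the same size for either enumeration.
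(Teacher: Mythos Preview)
Your proposal is correct and follows essentially the same route as the paper: Part~(1) via $\#=+$ on natural numbers, Part~(2) via $\gamma_1<\gamma_1\#\gamma_2$ for nonzero $\gamma_2$, and Part~(3) via the observation that every finite partial sum of one enumeration is dominated by some finite partial sum of the other. If anything, you are more careful than the paper in Part~(2), where the paper treats only the binary conjunction explicitly while you spell out why $\mathrm{s}(\phi_j)<\S\mathrm{s}(\phi_i)$ in the countable case.
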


\begin{proof}
1. When restricted to finite ordinals, the natural sum coincides with the usual ordinal sum.

2. Every formula has a non-zero size. If $\gamma_1$ and $\gamma_2$ are non-zero ordinals, we always have $\gamma_1<\gamma_1\#\gamma_2$ and $\gamma_2<\gamma_1\#\gamma_2$. Hence the size of a conjunction is always greater than both of the conjuncts. The same applies to disjunctions. The quantifier case is obvious.

3. The natural sum of ordinals is commutative. For the infinitary case, it suffices to observe that every finite sum $S_n$ of the sequence $\{\mathrm{s}(\phi_i)| i\in\omega\}$ is subsumed by a finite sum of the sequence $\{\mathrm{s}(\phi_{p(i)})| i\in\omega\}$.
\end{proof}

Now we move on to the other half of our plan, namely adjusting the rules of the game EFB$_n$ for $\LL$. We add, in addition to the four types of moves in EFB$_n$, two new types of moves: the infinite left splitting move and the infinite right splitting move. These two types of moves are supposed to model infinite conjunctions and disjunctions.

\begin{definition}
Let $L$ be a relational vocabulary and $\mA_0$, $\mB_0$ classes of $L$-structures. Let $\alpha$ be a countable ordinal. The game \EFB$_\alpha(\mA_0, \mB_0)$\index{EFB$_\alpha$} has two players. The number $\alpha$ is called the \it{rank}\index{rank} of the game. The positions in the game are of the form $(\mA, \mB, \gamma)$ where $\mA, \mB$ are classes of $L$-structures and $\gamma$ is an ordinal. The game begins from position $(\mA_0, \mB_0, \alpha)$. Suppose the game is in position $(\mA, \mB, \gamma)$. There are the following possibilities for the next move in the game.

\begin{description}

\item[Finite left splitting move:]Player \bf{I} first represents $\mA$ as a union $\mA_1\cup \mA_2$. He also chooses non-zero ordinals $\gamma_1$ and $\gamma_2$ such that $\gamma_1\#\gamma_2\leq \gamma$. Now the game continues from the position $(\mA_1, \mB, \gamma_1)$ or from the position $(\mA_2, \mB, \gamma_2)$, and player \bf{II} can choose which.

\item[Finite right splitting move:]Player \bf{I} first represents $\mB$ as a union $\mB_1\cup \mB_2$. He also chooses non-zero ordinals $\gamma_1$ and $\gamma_2$ such that $\gamma_1\#\gamma_2\leq \gamma$. Now the game continues from the position $(\mA, \mB_1, \gamma_1)$ or from the position $(\mA, \mB_2, \gamma_2)$, and player \bf{II} can choose which.

\item[Infinite left splitting move:]Player \bf{I} first represents $\mA$ as a countable union $\bigcup_{i\in\omega} \mA_i$. He also chooses a sequence of non-zero ordinals $\{\gamma_i|i\in \omega\}$ such that $\S\gamma_i\leq \gamma$. Now the game continues from the position $(\mA_i, \mB, \gamma_i)$ for some $i\in \omega$, and player \bf{II} can choose which.

\item[Infinite right splitting move:]Player \bf{I} first represents $\mB$ as a countable union $\bigcup_{i\in\omega} \mB_i$.  He also chooses a sequence of non-zero ordinals $\{\gamma_i|i\in \omega\}$ such that $\S\gamma_i\leq \gamma$. Now the game continues from the position $(\mA, \mB_i, \gamma_i)$ for some $i\in \omega$, and player \bf{II} can choose which.

\item[Left supplementing move:] Player \bf{I} picks an element from each structure $(A, \alpha)\in \mA$. More precisely, \bf{I} chooses a natural number $j$ and a choice function $F$ for $\mA$. He also chooses an ordinal $\delta<\gamma$. Then the game continues from the position $(\mA(F/j), \mB(\star/j), \delta)$.

\item[Right supplementing move:]Player \bf{I} picks an element from each structure $(B, \beta)\in \mB$. More precisely, \bf{I} chooses a natural number $j$ and a choice function $F$ for $\mB_m$. He also chooses an ordinal $\delta<\gamma$. Then the game continues from the position $(\mA(\star/j), \mB(F/j), \delta)$.
\end{description}
The game ends in a position $(\mA, \mB, \gamma)$ and player \bf{I} wins if there is an atomic or a negated atomic formula $\phi$ such that $(\mA, \mB)\models \phi$. Player \bf{II} wins the game if they reach a position $(\mA, \mB, \gamma)$ such that $\gamma=1$ and \bf{I} does not win in this position.
\end{definition}

The game EFB$_\alpha$ characterizes separation in $\LL$ up to size $\alpha$. The following theorem is the central result in this paper.

\begin{theorem}[Adequacy Theorem for EFB$_\alpha$]\label{t7}
Let $L$ be a relational vocabulary,\footnote{Note that $L$ is not necessarily countable.} $\mA$ and $\mB$ classes of $L$-structures, and let $\alpha$ be a countable ordinal. Then the following are equivalent:
\begin{enumerate}
\item Player \bf{I} has a winning strategy in the game \EFB$_\alpha(\mA, \mB)$.
\item There is an $L$-formula $\phi$ in $\LL$ of size $\leq \alpha$ such that $(\mA, \mB)\models \phi$.
\end{enumerate}
\end{theorem}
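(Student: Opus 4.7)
The plan is to prove both directions by induction, letting the syntactic constructors of $\LL$ correspond one-to-one with the move types of the game and tracking the size of formulas against the ordinal budget $\gamma$ in the game position.

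For $(2)\Rightarrow(1)$, I would induct on a separating formula $\phi\in\LL$ with $\mathrm{s}(\phi)\le\alpha$ and extract a winning strategy for Player \textbf{I}. The atomic case is immediate. For $\phi=\phi_1\wedge\phi_2$, Player \textbf{I} plays a finite right split, taking $\mB_i=\{B\in\mB:B\not\models\phi_i\}$ with budget $\mathrm{s}(\phi_i)$; by definition $\mathrm{s}(\phi_1)\#\mathrm{s}(\phi_2)=\mathrm{s}(\phi)\le\alpha$, and the inductive hypothesis handles both sub-games. The disjunctive case is symmetric via a finite left split $\mA_i=\{A:A\models\phi_i\}$, and the countable cases $\bigwedge_{i\in\omega}\phi_i$ and $\bigvee_{i\in\omega}\phi_i$ are the infinite analogues with budgets $\gamma_i=\mathrm{s}(\phi_i)$ satisfying $\S\gamma_i\le\alpha$. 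For $\phi=\exists x_j\psi$, Player \textbf{I} picks a witness to $\psi$ for each $(A,\alpha)\in\mA$, bundles them into a choice function $F$, and plays a left supplementing move with $\delta=\mathrm{s}(\psi)$; the universal case dually uses a right supplementing move.

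For $(1)\Rightarrow(2)$, I would induct on the ordinal $\gamma$ and strengthen the claim to subgames: whenever Player \textbf{I} has a winning strategy from $(\mA,\mB,\gamma)$, there is $\phi\in\LL$ with $\mathrm{s}(\phi)\le\gamma$ separating $(\mA,\mB)$. At $\gamma=1$, Player \textbf{I} can only win by an atomic or negated atomic formula of size $1$. Otherwise I case-split on Player \textbf{I}'s first move. A finite right split into $\mB_1\cup\mB_2$ with $\gamma_1\#\gamma_2\le\gamma$ gives inductively $\phi_i$ separating $(\mA,\mB_i)$ with $\mathrm{s}(\phi_i)\le\gamma_i$; the conjunction $\phi_1\wedge\phi_2$ then separates $(\mA,\mB)$ with size $\le\gamma$. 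A finite left split yields $\phi_1\vee\phi_2$, and the infinite right and left splits yield $\bigwedge_{i\in\omega}\phi_i$ and $\bigvee_{i\in\omega}\phi_i$ respectively. A left supplementing move with parameters $(j,F,\delta)$ gives by induction a $\psi$ of size $\le\delta<\gamma$ separating $(\mA(F/j),\mB(\star/j))$, whence $\exists x_j\psi$ separates $(\mA,\mB)$ with size $\mathrm{s}(\psi)+1\le\gamma$; the right supplementing case produces $\forall x_j\psi$ dually.

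The main obstacle I expect is the size bookkeeping: every case must verify that the inductively produced formula has size no larger than the current ordinal budget, and for the infinitary operations this relies on the natural sum $\#$ and the infinite natural sum $\S$ from Definition \ref{d8} matching the game constraints $\gamma_1\#\gamma_2\le\gamma$ and $\S\gamma_i\le\gamma$; this is precisely why a naive definition of size based on ordinary ordinal addition would break the induction. A secondary delicate point is the infinite splitting direction of $(1)\Rightarrow(2)$, where the resulting formula must be assembled as a countable conjunction or disjunction across all continuations Player \textbf{II} could choose, and the countable index condition in the game rules is exactly what guarantees the outcome is a legitimate $\LL$-formula.
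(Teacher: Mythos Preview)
Your proposal is correct and follows essentially the same approach as the paper: a case analysis matching the six move types of the game with the six formula-forming operations of $\LL$, using the monotonicity of the natural sum for the size bookkeeping. The only cosmetic difference is that the paper runs a single induction on the ordinal rank $\alpha$ for both directions, whereas you phrase $(2)\Rightarrow(1)$ as structural induction on $\phi$; since proper subformulas have strictly smaller size (Proposition~\ref{p5}(2)), these amount to the same thing.
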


\begin{proof}
We prove this theorem by induction on the rank $\alpha$. When $\alpha=1$ the proposition is obvious. Suppose the proposition is true for all ordinals $\alpha<\gamma$. Now consider the case $\alpha=\gamma$. 

(1$\Longrightarrow$ 2) Suppose \bf{I} has a winning strategy in the game EFB$_\gamma(\mA, \mB)$. We look at the first move in this strategy. Depending on which type of move it is, there are the following possibilities.

\bf{Case 1.} Player \bf{I} first makes a finite left splitting move. He represents $\mA$ as a union  $\mA_1\cup \mA_2$. He chooses non-zero ordinals $\gamma_1$ and $\gamma_2$ such that $\gamma_1\#\gamma_2\leq \gamma$. Now since this is a winning strategy, both $(\mA_1, \mB, \gamma_1)$ and $(\mA_2, \mB, \gamma_2)$ are winning positions for \bf{I}. Note that both $\gamma_1$ and $\gamma_2$ are strictly smaller than $\gamma$. By the induction hypothesis, there are $L$-formulas $\psi$ and $\theta$ in $\LL$ such that $\mathrm{s}(\psi)\leq \gamma_1$, $\mathrm{s}(\theta)\leq \gamma_2$, $(\mA_1, \mB)\models \psi$ and $(\mA_2, \mB)\models \theta$. Let $\phi$ be the formula $\psi\vee \theta$. Then we have that $\mathrm{s}(\phi)=\mathrm{s}(\psi)\#\mathrm{s}(\theta)\leq\gamma_1\#\gamma_2\leq \gamma$, and that $(\mA, \mB)\models \phi$.

\bf{Case 2.} Player \bf{I} first makes a finite right splitting move. He represents $\mB$ as a union  $\mB_1\cup \mB_2$. He chooses non-zero ordinals $\gamma_1$ and $\gamma_2$ such that $\gamma_1\#\gamma_2\leq\gamma$. Now since this is a winning strategy, both $(\mA, \mB_1, \gamma_1)$ and $(\mA, \mB_2, \gamma_2)$ are winning positions for \bf{I}. Note that both $\gamma_1$ and $\gamma_2$ are strictly smaller than $\gamma$. By the induction hypothesis, there are formulas $\psi$ and $\theta$ such that $\mathrm{s}(\psi)\leq \gamma_1$, $\mathrm{s}(\theta)\leq \gamma_2$, $(\mA, \mB_1)\models \psi$ and $(\mA, \mB_2)\models \theta$. Let $\phi$ be the formula $\psi\wedge \theta$. Then we have that $\mathrm{s}(\phi)= \mathrm{s}(\psi)\#\mathrm{s}(\theta)\leq \gamma_1\#\gamma_2\leq\gamma$, and that $(\mA, \mB)\models \phi$.

\bf{Case 3.} Player \bf{I} first makes an infinite left splitting move. He represents $\mA$ as a union $\bigcup_{i\in\omega} \mA_i$. He chooses ordinals $\{\gamma_i|i\in \omega\}$ such that $\S\gamma_i\leq \gamma$. Note that every $\gamma_i$ is strictly smaller than $\gamma$. Now since this is a winning strategy, for every $i\in \omega$ $(\mA_i, \mB, \gamma_i)$ is a winning position for him. By the induction hypothesis, there are $L$-formulas $\phi_i$ in $\LL$ such that $\mathrm{s}(\phi_i)\leq \gamma_i$ and $(\mA_i, \mB)\models \phi_i$ for all $i\in \omega$. Let $\phi$ be the formula $\bigvee_{i\in \omega} \phi_i$. It is an $\LL$-formula. We have that $\mathrm{s}(\phi)=\S \mathrm{s}(\phi_i)\leq\S \gamma_i\leq \gamma$, and that $(\mA, \mB)\models \phi$.

\bf{Case 4.} Player \bf{I} first makes an infinite right splitting move. He represents $\mB$ as a union $\bigcup_{i\in\omega} \mB_i$. He chooses ordinals $\{\gamma_i|i\in \omega\}$ such that $\S\gamma_i\leq \gamma$. Note that every $\gamma_i$ is strictly smaller than $\gamma$. Now since this is a winning strategy, for every $i\in \omega$ $(\mA, \mB_i, \gamma_i)$ is a winning position for him. By the induction hypothesis, there are $L$-formulas $\phi_i$ in $\LL$ such that $\mathrm{s}(\phi_i)\leq \gamma_i$ and $(\mA, \mB_i)\models \phi_i$ for all $i\in \omega$. Let $\phi$ be the formula $\bigwedge_{i\in \omega} \phi_i$. It is an $\LL$-formula. We have that $\mathrm{s}(\phi)=\S \mathrm{s}(\phi_i)\leq\S \gamma_i\leq \gamma$, and that $(\mA, \mB)\models \phi$.

\bf{Case 5.} Player \bf{I} first makes a left supplementing move. He chooses a natural number $j$ and a choice function $F$ for $\mA$. He also chooses an ordinal $\delta<\gamma$. The game continues from $(\mA(F/j), \mB(\star/j), \delta)$. Since this is a winning strategy for \bf{I}, the position $(\mA(F/j), \mB(\star/j), \delta)$ is a winning position for him too. By the induction hypothesis, there is a formula $\psi$ such that $\mathrm{s}(\psi)\leq \delta$, and $(\mA(F/j), \mB(\star/j))\models \psi$. Let $\phi$ be the formula $\exists x_j\psi$. Then we have that $\mathrm{s}(\phi)=\mathrm{s}(\psi)+1\leq \delta+1\leq \gamma$, and that $(\mA, \mB)\models \phi$.

\bf{Case 6.} Player \bf{I} first makes a right supplementing move. He chooses a natural number $j$ and a choice function $F$ for $\mB$. He also chooses an ordinal $\delta<\gamma$. The game continues from $(\mA(\star/j), \mB(F/j), \delta)$. Since this is a winning strategy for \bf{I}, the position $(\mA(\star/j), \mB(F/j), \delta)$ is a winning position for him too. By the induction hypothesis, there is a formula $\psi$ such that $\mathrm{s}(\psi)\leq \delta$, and $(\mA(\star/j), \mB(F/j))\models \psi$. Let $\phi$ be the formula $\forall x_j\psi$. Then we have that $\mathrm{s}(\phi)=\mathrm{s}(\psi)+1\leq \delta+1\leq \gamma$, and that $(\mA, \mB)\models \phi$. \newline

Now for the converse direction (2 $\Longrightarrow$ 1). Suppose there is an $\LL$-formula $\phi$ of size $\leq \gamma$ such that $(\mA, \mB)\models \phi$. Depending on the shape of $\phi$, there are the following possibilities. 
 
 \bf{Case 1.} The formulas $\phi$ is an atomic formula. By definition player \bf{I} wins the game EFB$_1(\mA, \mB)$.
 
 \bf{Case 2.} The formula $\phi$ is $\psi\vee\theta$. Let $\mA_1$ be the class of structures $(A, \alpha)\in \mA$ such that $(A, \alpha)\models \psi$, $\mA_2$ be the class of structures $(A, \alpha)\in \mA$ such that 
$(A, \alpha)\models \theta$. Since $(A, \alpha)\models \phi$ for all $(A, \alpha)\in \mA$, we have $\mA=\mA_1\cup \mA_2$. Moreover since  $(B, \beta)\not\models \phi$ for all $(B, \beta)\in \mB$, we have $(\mA_1, \mB)\models \psi$ and $(\mA_2, \mB)\models \theta$. Finally, as $\mathrm{s}(\phi)\leq \gamma$, there are ordinals $\gamma_1$ and $\gamma_2$ such that $\mathrm{s}(\psi)\leq \gamma_1$, $\mathrm{s}(\theta)\leq \gamma_2$, and $\gamma_1\#\gamma_2\leq \gamma$.  By the induction hypothesis, \bf{I} has a winning strategy in both $(\mA_1, \mB, \gamma_1)$ and $(\mA_2, \mB, \gamma_2)$. Therefore he has a winning strategy in $(\mA, \mB, \gamma)$ by first making a finite left splitting move, and then follow the winning strategy in $(\mA_1, \mB, \gamma_1)$ or $(\mA_2, \mB, \gamma_2)$.

\bf{Case 3.} The formula $\phi$ is $\psi\wedge\theta$. This case is dual to case 2 and we omit the details.

\bf{Case 4.} The formula $\phi$ is $\bigvee_{i\in\omega}\phi_i$. For each $i$, let $\mA_i$ be the class of structures $(A, \alpha)\in \mA$ such that $(A, \alpha)\models \phi_i$. Since $(A, \alpha)\models \phi$ for all $(A, \alpha)\in \mA$, we have $\mA=\bigcup_{i\in\omega}\mA_i$. Moreover since  $(B, \beta)\not\models\phi$ for all $(B, \beta)\in \mB$, we have $(\mA_i, \mB)\models \phi_i$ for every $i$. Finally, as $\mathrm{s}(\phi)\leq \gamma$, there are ordinals $\{\gamma_i|i\in \omega\}$ such that $\mathrm{s}(\phi_i)\leq \gamma_i$ and $\S\gamma_i\leq \gamma$. By the induction hypothesis, \bf{I} has a winning strategy in $(\mA_i, \mB, \gamma_i)$ for every $i$. Therefore he has a winning strategy in $(\mA, \mB, \gamma)$ by first making an infinite left splitting move, and then follow the winning strategy in some $(\mA_i, \mB, \gamma_i)$.

\bf{Case 5.} The formula $\phi$ is $\bigwedge_{i\in\omega}\phi_i$. This case is dual to case 4 and we omit the details. 

\bf{Case 6.} The formula $\phi$ is $\exists x_j \psi$. Since $(A, \alpha)\models \phi$ for all $(A, \alpha)\in \mA$, there is a choice function $F$ for $\mA$ such that $(A, \alpha(F(A, \alpha)/j))\models \psi$. Thus $(A, \alpha^*)\models \psi$ for all $(A, \alpha^*)\in \mA(F/j)$. On the other hand, we have that $(\mB, \beta^*)\not\models \psi$ for all $(\mB, \beta^*)\in \mB(\star/j)$. Therefore $(\mA(F/j), \mB(\star/j))\models \psi$. By the induction hypothesis, player \bf{I} has a winning strategy in  $(\mA(F/j), \mB(\star/j), \mathrm{s}(\psi))$. Note that $\mathrm{s}(\phi)=\mathrm{s}(\psi)+1$. Hence \bf{I} has a winning strategy in $(\mA, \mB, \gamma)$ by first making a left supplementing move, chooses the ordinal $\mathrm{s}(\psi)<\mathrm{s}(\phi)\leq\gamma$ and then follow the winning strategy in $(\mA(F/j), \mB(\star/j), \mathrm{s}(\psi))$.

\bf{Case 7.} The formula $\phi$ is $\forall x_j \psi$. This case is dual to case 6 and we omit the details.
\end{proof}

\begin{corollary}\label{c8}
Given two classes of $L$-structures $\mA$ and $\mA'$, let $\mA\subseteq \mA'$ denote that every structure in $\mA$ is contained in $\mA'$. Let $\mA, \mA', \mB, \mB'$ be classes of $L$-structures and $\alpha$ be a countable ordinal. Suppose $\mA'\subseteq\mA$ and $\mB'\subseteq\mB$. If player \bf{I} has a winning strategy in \EFB$_\alpha(\mA, \mB)$, then he also has a winning strategy in \EFB$_\alpha(\mA', \mB')$.
\end{corollary}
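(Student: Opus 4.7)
The plan is to apply the Adequacy Theorem (Theorem \ref{t7}) twice, bridging the two applications by the trivial observation that the separation relation $(\cdot,\cdot)\models\phi$ is preserved under passing to subclasses.

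Concretely, from the hypothesis that Player \textbf{I} wins \EFB$_\alpha(\mA, \mB)$, the (1$\Longrightarrow$2) direction of Theorem \ref{t7} delivers an $\LL$-formula $\phi$ of size $\leq\alpha$ with $(\mA, \mB)\models\phi$. Unpacking this, $A\models\phi$ for every $A\in\mA$ and $B\not\models\phi$ for every $B\in\mB$. Since $\mA'\subseteq\mA$ and $\mB'\subseteq\mB$, these two conditions continue to hold for the smaller classes, so $(\mA', \mB')\models\phi$ with the very same formula $\phi$, whose size is still $\leq\alpha$. Now the (2$\Longrightarrow$1) direction of Theorem \ref{t7} applied to $(\mA', \mB', \alpha)$ produces the desired winning strategy for \textbf{I} in \EFB$_\alpha(\mA', \mB')$.

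I do not expect a genuine obstacle here: the heavy lifting is absorbed by Theorem \ref{t7}, and the remainder is just the remark that separating formulas transfer freely along subclass inclusion. A direct game-theoretic proof by induction on $\alpha$ is possible in principle, translating each of \textbf{I}'s moves in the larger game into a move in the smaller one (splitting moves restrict by intersecting the announced pieces with $\mA'$ or $\mB'$; supplementing moves restrict by taking the restriction of the choice function $F$ to the smaller class, which preserves the inclusions $\mA'(F/j)\subseteq\mA(F/j)$ and $\mA'(\star/j)\subseteq\mA(\star/j)$). The mild annoyance that intersection with $\mA'$ may leave one of the two pieces of a finite splitting move empty, forcing a minor case analysis and an adjustment of the ordinal split, is what makes the adequacy-theorem route strictly cleaner, and that is the route I would take.
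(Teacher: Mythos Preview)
Your proposal is correct and follows exactly the same approach as the paper: apply Theorem~\ref{t7} to obtain a separating formula $\phi$ of size $\leq\alpha$, observe that $(\mA',\mB')\models\phi$ by the subclass inclusions, and apply Theorem~\ref{t7} again. Your remarks about the alternative direct game-theoretic argument are accurate but unnecessary for the corollary.
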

\begin{proof}
Suppose \bf{I} has a winning strategy in EFB$_\alpha(\mA, \mB)$. By Theorem \ref{t7}, there is a formula $\phi$ with $\mathrm{s}(\phi)\leq \alpha$ such that $(\mA, \mB)\models \phi$. Since $\mA'\subseteq \mA$ and $\mB'\subseteq \mB$, we also have that $(\mA', \mB')\models \phi$. By Theorem \ref{t7} again, \bf{I} has a winning strategy in EFB$_\alpha(\mA', \mB')$.
\end{proof}

The six types of moves in the game EFB$_\alpha$ model the six formula-forming operations in $\LL$ respectively. The finite left splitting move corresponds to finite disjunction, the finite right splitting move to finite conjunction, the infinite left splitting move to infinite disjunction, the infinite right splitting move to infinite conjunction, the left supplementing move to existential quantification, and the right supplementing move to universal quantification. This feature enables us to consider various fragments of the game EFB$_\alpha$ which characterize separation in different fragments of $\LL$. If we forbid infinite splitting moves, what we get is essentially the old game EFB$_n$. If we only allow splitting moves in the game, the result is a game for the propositional fragment of $\LL$. 

\begin{corollary}\label{c9}
Let \EFB$^P_\alpha(\mA, \mB)$ be the restriction of the game \EFB$_\alpha(\mA, \mB)$ where player \bf{I} can only make finite and infinite splitting moves, but no supplementing moves. Then the following are equivalent.
\begin{enumerate}
\item Player \bf{I} has a winning strategy in the game \EFB$^P_\alpha(\mA, \mB)$.
\item There is a quantifier-free $\LL$-formula $\phi$ of size $\leq \alpha$ such that $(\mA, \mB)\models \phi$.
\end{enumerate}
\end{corollary}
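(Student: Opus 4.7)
The plan is to repeat the induction on the rank $\alpha$ used in the proof of Theorem \ref{t7}, excising the cases that involve quantifiers or supplementing moves. Since quantifier-free $\LL$-formulas are built only from atomic (and negated atomic) formulas together with $\vee$, $\wedge$, $\bigvee_{i\in\omega}$ and $\bigwedge_{i\in\omega}$, and the game $\EFB^P_\alpha$ permits only the four splitting moves, the natural correspondence between connectives and moves established in Theorem \ref{t7} restricts cleanly to this fragment.

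For the direction (1 $\Longrightarrow$ 2), I would assume player \textbf{I} has a winning strategy in $\EFB^P_\alpha(\mA, \mB)$ and argue by induction on $\alpha$. The base case $\alpha=1$ is immediate from the winning condition. For larger $\alpha$, the first move of the strategy is by hypothesis one of the four splitting moves, so exactly Cases 1--4 of the first half of Theorem \ref{t7}'s proof apply. In each case the induction hypothesis supplies quantifier-free separating formulas, and the formulas that recombine them — $\psi\vee\theta$, $\psi\wedge\theta$, $\bigvee_{i\in\omega}\phi_i$, $\bigwedge_{i\in\omega}\phi_i$ — remain quantifier-free and still have size $\leq\alpha$ by the same size calculations used there.

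For the converse (2 $\Longrightarrow$ 1), suppose $\phi$ is a quantifier-free $\LL$-formula of size $\leq \alpha$ with $(\mA,\mB)\models\phi$. Since $\phi$ has no quantifiers it falls under exactly Cases 1--5 of the second half of Theorem \ref{t7}'s proof, that is, $\phi$ is atomic (or negated atomic), $\psi\vee\theta$, $\psi\wedge\theta$, $\bigvee_{i\in\omega}\phi_i$, or $\bigwedge_{i\in\omega}\phi_i$. In each non-atomic case the proof instructs player \textbf{I} to open with the appropriate splitting move and then follow the winning strategy supplied by the induction hypothesis on a smaller-rank subgame whose separating formula is again quantifier-free; by the induction hypothesis that inner strategy uses only splitting moves, so the assembled strategy is legal in $\EFB^P_\alpha$.

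The only point that needs genuine verification — really a bookkeeping remark rather than a difficulty — is that the induction stays closed under both restrictions simultaneously: proper subformulas of a quantifier-free formula are quantifier-free, and a strategy assembled entirely from splitting-move branches contains no supplementing move. Both are immediate from the definitions, so the corollary is obtained from Theorem \ref{t7} essentially by deleting Cases 5--7 (respectively Cases 6--7) from the two halves of its proof.
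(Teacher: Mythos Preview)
Your proposal is correct and is precisely the argument the paper has in mind: the corollary is stated without proof, being an immediate restriction of the induction in Theorem~\ref{t7} to the splitting-move/quantifier-free cases, and you have carried that out accurately. One tiny slip: in your closing sentence the first half of Theorem~\ref{t7} has only six cases, so you mean to delete Cases 5--6 there (not 5--7), but this does not affect the substance.
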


\section{Alternative Conceptions of Size}
Let us take some time to reflect on our result. We have shown that measuring the complexity of formulas by size is \it{a} way to define an \EF game for $\LL$. However, our viewpoint naturally suggests the possibility of a plurality of complexity measures, and hence a plurality of \EF games for $\LL$. Can we find other complexity measures which give rise to an \EF game for $\LL$? Furthermore, to which extent are these complexity measures \it{reasonable}?

We shall start by looking at the bare minimum, namely which complexity measures could give rise to an \EF game. It turns out that if we merely want an adequacy theorem to hold, we can be quite relaxed about the choice of complexity measures. 

\begin{definition}
Let $q:\mathrm{Ord}\rightarrow \mathrm{Ord}$, $r:\mathrm{Ord}^2\rightarrow \mathrm{Ord}$ and $t:\mathrm{Ord}^\omega\rightarrow \mathrm{Ord}$ be ordinal functions. Consider an abstract complexity measure $c$ defined by $q$, $r$ and $t$:
\begin{displaymath}\label{e1}
\begin{aligned}
c(\phi)&=1 \textrm{ if } \phi \textrm{ is an atomic or negated atomic formula},\\
c(\phi \wedge \psi)&= r((c(\phi),c(\psi)),\\
c(\phi\vee \psi)&=r((c(\phi),c(\psi)),\\
c(\exists x \phi)&=q(c(\phi)),\\
c(\forall x \phi)&= q(c(\phi)),\\
c(\bigwedge_{i\in \omega} \phi_i)&=t( \{c(\phi_i)|i\in\omega\}),\\
c(\bigvee_{i\in \omega} \phi_i)&=t(\{c(\phi_i)|i\in\omega\}).
\end{aligned}
\end{displaymath}

We say that $c$ is \it{nice} if $q$, $r$ and $t$ satisfy the following conditions:
\begin{enumerate}
 \item If $\gamma>0$, then $q(\gamma)>\gamma$.
\item If $\gamma_1,\gamma_2>0$, then $r(\gamma_1, \gamma_2)>\gamma_1$ and $r(\gamma_1, \gamma_2)>\gamma_2$.
 \item If $\{\gamma_i|i\in\omega\}$ is a sequence of non-zero ordinals, then $t(\{\gamma_i|i\in\omega\})>\gamma_i$ for all $i\in\omega$. 
\end{enumerate}
\end{definition}

Niceness guarantees that the complexity of a formula is always strictly larger than the complexity of its proper subformulas. Granted this, we can generalize the game EFB$_\alpha$ to any nice complexity measure by performing minor cosmetic surgery. Let EFB$^c_\alpha$ denote the \EF game for $\LL$ under the complexity measure $c$. The starting position and the winning condition in the game EFB$^c_\alpha$ remain the same as in EFB$_\alpha$. The way that the rank of the game is counted is modified. We only give the `left' moves in the game EFB$^c_\alpha$ here; the `right' moves are dual.

\begin{description}
\item[Finite left splitting move:]Player \bf{I} first represents $\mA$ as a union $\mA_1\cup \mA_2$. He also chooses non-zero ordinals $\gamma_1$ and $\gamma_2$ such that $r(\gamma_1,\gamma_2)\leq \gamma$. Now the game continues from the position $(\mA_1, \mB, \gamma_1)$ or from the position $(\mA_2, \mB, \gamma_2)$, and player \bf{II} can choose which.

\item[Infinite left splitting move:]Player \bf{I} first represents $\mA$ as a countable union $\bigcup_{i\in\omega} \mA_i$. He also chooses a sequence of non-zero ordinals $\{\gamma_i|i\in \omega\}$ such that $t(\{\gamma_i|i\in\omega\})\leq \gamma$. Now the game continues from the position $(\mA_i, \mB, \gamma_i)$ for some $i\in \omega$, and player \bf{II} can choose which.

\item[Left supplementing move:] Player \bf{I} picks an element from each structure $(A, \alpha)\in \mA$. More precisely, \bf{I} chooses a natural number $j$ and a choice function $F$ for $\mA$. He also chooses an ordinal $\delta$ such that $q(\delta)\leq\gamma$. Then the game continues from the position $(\mA(F/j), \mB(\star/j), \delta)$.
\end{description}

It is an easy observation that the proof of the adequacy theorem goes through if we substitute size for any nice complexity measure. We have the following generalized adequacy theorem.
\begin{theorem}
Let $c$ be a nice complexity measure. Let $L$ be a relational vocabulary, $\mA$ and $\mB$ classes of $L$-structures, and let $\alpha$ be an ordinal. Then the following are equivalent:
\begin{enumerate}
\item Player \bf{I} has a winning strategy in the game \EFB$^c_\alpha(\mA, \mB)$.
\item There is an $L$-formula $\phi$ in $\LL$ with $c(\phi)\leq\alpha$ such that $(\mA, \mB)\models \phi$.
\end{enumerate}
\end{theorem}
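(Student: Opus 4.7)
The plan is to mirror the proof of Theorem \ref{t7} essentially verbatim, replacing the specific operations $+1$, $\#$, and $\S$ by the abstract functions $q$, $r$, and $t$ throughout, and invoking the three niceness conditions exactly where the corresponding strict inequalities for $+1$, $\#$, and $\S$ were used. I would proceed by induction on the rank $\alpha$; the base case $\alpha=1$ is handled by the definition of the winning condition in EFB$^c_1$ together with the fact that $c(\phi)=1$ precisely when $\phi$ is atomic or negated atomic.

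For the direction (1 $\Longrightarrow$ 2), suppose player \bf{I} has a winning strategy in EFB$^c_\gamma(\mA,\mB)$ and inspect the first move prescribed by it. The crucial use of niceness is to guarantee that every resulting sub-game has strictly smaller rank: condition (1) gives $\delta<q(\delta)\leq\gamma$ for a supplementing move, condition (2) gives $\gamma_1,\gamma_2<r(\gamma_1,\gamma_2)\leq\gamma$ for a finite splitting move, and condition (3) gives $\gamma_i<t(\{\gamma_j\mid j\in\omega\})\leq\gamma$ for each branch of an infinite splitting move. The induction hypothesis then furnishes formulas separating the sub-classes with $c$-complexity bounded by the corresponding sub-rank, and we assemble these into $\phi$ using the formula-former matching the move type: $\exists x_j\psi$, $\forall x_j\psi$, $\psi_1\vee\psi_2$, $\psi_1\wedge\psi_2$, $\bigvee_{i\in\omega}\psi_i$, or $\bigwedge_{i\in\omega}\psi_i$. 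The bound $c(\phi)\leq\gamma$ is then read off from the definition of $c$ together with the bounds on the $c(\psi_k)$.

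For the converse (2 $\Longrightarrow$ 1), induct on the shape of $\phi$ as in the proof of Theorem \ref{t7}. For $\phi=\psi_1\vee\psi_2$ or $\phi=\bigvee_{i\in\omega}\phi_i$, split $\mA$ according to which disjunct each structure in $\mA$ satisfies, and dually for conjunctions via $\mB$. For $\phi=\exists x_j\psi$ or $\phi=\forall x_j\psi$, define the appropriate choice function $F$ on $\mA$ or $\mB$. In each case the induction hypothesis yields winning strategies in the sub-games at the sub-complexity ranks, and player \bf{I} plays the matching type of move in EFB$^c_\gamma(\mA,\mB)$ with these ranks as his chosen parameters; these are legal choices because $r(c(\psi_1),c(\psi_2))$, $q(c(\psi))$, and $t(\{c(\phi_i)\mid i\in\omega\})$ are by definition equal to $c(\phi)$, which is $\leq\gamma$.

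The one place where slight extra care is needed is the (1 $\Longrightarrow$ 2) direction: the induction hypothesis produces formulas whose $c$-complexity is only \emph{bounded} by, not necessarily equal to, the sub-ranks chosen by \bf{I}. To conclude $c(\phi)\leq\gamma$ after composing with $q$, $r$, or $t$, one needs these functions to be monotone in each argument; this is implicit in any natural complexity measure (and certainly holds for size) but is not literally included among the niceness conditions. Granting monotonicity, the entire argument is a straightforward transcription of the six cases of the proof of Theorem \ref{t7}.
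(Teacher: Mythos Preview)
Your proposal is exactly what the paper does: the paper's entire ``proof'' of this generalized theorem is the single sentence ``It is an easy observation that the proof of the adequacy theorem goes through if we substitute size for any nice complexity measure,'' and your outline carries this out case by case.

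Your caveat about monotonicity is well taken and in fact sharper than the paper. In the $(1\Rightarrow 2)$ direction the induction hypothesis yields subformulas $\psi_i$ with $c(\psi_i)\leq\gamma_i$, and to bound $c(\phi)$ one must pass from $r(c(\psi_1),c(\psi_2))$ to $r(\gamma_1,\gamma_2)$ (and similarly for $q$ and $t$); the paper's proof of Theorem~\ref{t7} uses exactly this monotonicity step for $\#$, $\S$, and $+1$, but the niceness conditions as stated do not include it. The paper simply does not comment on this gap. So your transcription is faithful to the paper's intended argument, and your identification of the missing hypothesis is a genuine observation rather than a defect in your proof.
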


In particular, size is a nice complexity measure: take $q$ to be the successor function, $r$ the natural sum, and $t$ the infinite natural sum. Another interesting nice complexity measure $c_1$ can be obtained by taking $q$ to be the successor function, $r(\gamma_1, \gamma_2)=\sup(\gamma_1+1, \gamma_2+1)$ and $t(\{\gamma_i|i\in\omega\})=\sup_{i\in\omega}(\gamma_i+1)$. Note that if we take $q$ to be the successor function and $r, t$ to be the supremum, then the complexity measure is just quantifier rank---but this is not a nice complexity measure. 

To determine which of the nice complexity measures are the more natural and reasonable ones is a subtler question. As a general rule of thumb, we would like a complexity measure to carry some meaning, so that we have an idea of what exactly it is measuring. We also want a complexity measure to be invariant under certain logical equivalences, so that it is compatible with our intuition about `complexity'. Between size and $c_1$, size has the advantage of having a clear intuition behind it. This is especially so in the propositional case: the size of a propositional formula is just the number of propositional symbols in this formula. The quantifier and infinitary cases are natural extensions of this basic intuition. For $c_1$ the intuition is not so clear. 
As for invariance under logical equivalences, both size and $c_1$ are invariant under permutations of conjuncts and disjuncts. They part ways, however, when we try to count the complexity of repetitions of formulas. Consider atomic formulas $\phi_i, i\in\omega$. Let $\Theta$ be the infinite conjunction $\bigwedge_{i\in\omega}\phi_0$. Let $\Psi$ be the infinite conjunction $\bigwedge_{i\in\omega}\phi_i$. Size assigns $\Theta$ and $\Psi$ the complexity $\omega$. The measure $c_1$ assigns $\Theta$ and $\Psi$ the complexity $2$. It seems that $c_1$ gives the more reasonable complexity for $\Theta$, since $\Theta$ is no more than a repetition of the same formula $\phi_0$. On the other hand size seems to give the more reasonable complexity for $\Psi$, while $c_1$ does not reflect the fact that $\Psi$ contains much more information than any finite conjunction. However as long as complexity measures are defined purely syntactically, every complexity measure would assign the same complexity to $\Theta$ and $\Psi$, so there is no complexity measure that assigns the `right' complexity to $\Theta$ and $\Psi$ at the same time. This suggests that there is no point in trying to find \it{the} reasonable complexity measure. We may indeed adopt a pluralistic view: we shall choose which is the appropriate complexity measure depending on what information we intend to model.

\section{An Application: the Complexity of Infinite Binary String Properties}

In this section we apply the newly defined game EFB$_\alpha$ to solve one instance of Question 1. We focus on information code by \textit{infinite binary strings}. An infinitary binary string $t$ is a function from $\omega$ to $2$. We can think of a string $t$ simply as a sequence of $0$'s and $1$'s. A property $\P$ of infinite binary strings is a subset $\P\subset 2^\omega$. 

Infinite binary strings can be studied using the propositional fragment of $\LL$. Let $L$ be the propositional language with countably many propositional symbols $p_i$, $i\in \omega$. For an infinitary binary string $t$ and a propositional symbol $p_i$, we say that 
\[t\models p_i\ \textrm{if}\ t(i)=1,\]
and
\[t\models \neg p_i\ \textrm{if} \ t(i)=0.\]
Intuitively, $p_i$ has the intended meaning that `the i'th position of $t$ is $1$', and $\neg p_i$ has the intended meaning that `the i'th position of $t$ is $0$'. We pose the question:
\begin{description}
\item[Question 2:] Given a string property $\P$, what is the minimal size of a propositional $\LL$-formula defining $\P$?
\end{description} 
Note that an infinite binary string $t$ codes a real number. Hence this is essentially a question about the definability of sets of real numbers.

There is a straightforward way to define a string property $\P$. An infinite binary string $t$ can be characterized by the following sentence in $\LL$:
\[ \theta_t=\bigwedge_{i\in\omega}p_i^t,\]
where 
\begin{displaymath}
p_i^t=\left\{ \begin{array}{ll} 
p_i & \textrm{if $t(i)=1$,}\\
\neg p_i & \textrm{if $t(i)=0$.}\end{array}\right.
\end{displaymath}
A property $\P$ of infinite binary strings can be defined by the following sentence:
\[\Phi_\P=\bigvee_{t\in\P} \theta_t.
\]
In general $\Phi_\P$ is an $L_{\infty\omega}$-sentence. When the cardinality of $\P$ is countable, this sentence is in $\LL$. In this case the sentence $\Phi_\P$ has size
\[\mathrm{s}(\Phi_\P)=\sum_{t\in\P}^\#\mathrm{s}(\theta_t)=\sum_{i\in\omega}^\#\omega=\omega^2.\]
It is easy to see that if the complement of $\P$ in $2^\omega$ is countable, $\P$ can also be defined by an $\LL$-sentence of size $\omega^2$. The question is whether the sentence $\Phi_\P$ is minimal.

Regarding each infinite binary string as a `model', we can play the game \EFB$^P_\alpha$ on classes of infinite binary strings. This game solves Question 2 for an interesting class of string properties.
\begin{definition}
A property $\P$ of infinite binary strings is said to be \it{dense} if it is dense in the space\footnote{Called the Cantor space.} $2^\omega$ with the product topology.
\end{definition}

\begin{theorem}\label{t10}
Let $\P_1$ and $\P_2$ be dense properties of infinite binary strings. Player \bf{II} has a winning strategy in \EFB$_\alpha^P(\P_1, \P_2)$ for all $\alpha<\omega^2$.
\end{theorem}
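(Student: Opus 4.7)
The plan is to give Player \textbf{II} an explicit strategy that maintains the following invariant throughout play: at every position $(\mathcal{C}_1,\mathcal{C}_2,\gamma)$ reached, there is a finite binary string $\sigma$ for which both $\mathcal{C}_1\cap[\sigma]$ and $\mathcal{C}_2\cap[\sigma]$ are dense in the cylinder $[\sigma]\subseteq 2^\omega$. This invariant holds at the start with $\sigma$ the empty string, and whenever it holds no literal $p_k$ or $\neg p_k$ can separate the two classes: for $k\in\mathrm{dom}(\sigma)$ both sides uniformly agree on $p_k$, while for $k\notin\mathrm{dom}(\sigma)$ density forces both $0$ and $1$ to occur at position $k$ on each side. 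Thus if the invariant survives until $\gamma=1$, Player \textbf{II} wins.

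I would verify preservation by induction on $\gamma<\omega^2$. For a finite splitting move the standard Baire-type argument suffices: a finite union of sets nowhere dense in $[\sigma]$ is nowhere dense, so whenever Player \textbf{I} partitions $\mathcal{C}_1$ (or $\mathcal{C}_2$) into two pieces, at least one of them is dense in some sub-cylinder $[\sigma']\supseteq[\sigma]$. Player \textbf{II} picks this piece and refines $\sigma$ to $\sigma'$; the opposite class, being dense in $[\sigma]$, is automatically dense in $[\sigma']$, and the rank drops strictly.

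The heart of the argument is an infinite left splitting move $\mathcal{C}_1=\bigcup_{i\in\omega}\mathcal{D}_i$ with $\S\gamma_i\leq\gamma<\omega^2$ (the right version is symmetric). Writing $\gamma$ in Cantor normal form and using the fact that an infinite natural sum of non-zero finite ordinals already exhausts $\omega$, only finitely many $\gamma_i$ can be $\geq\omega$; call these the \emph{big} branches. If some $\mathcal{D}_i\cap[\sigma]$ happens to be somewhere dense in $[\sigma]$, Player \textbf{II} picks it as in the finite case. Otherwise every $\mathcal{D}_i\cap[\sigma]$ is nowhere dense in $[\sigma]$, so the finitely many big branches together cover only a nowhere dense subset of $[\sigma]$; since $\mathcal{C}_1\cap[\sigma]$ is dense, some \emph{small} branch $\mathcal{D}_i$ with $\gamma_i<\omega$ must still meet $[\sigma]$, and Player \textbf{II} picks such a branch.

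Here the left-side density is sacrificed, but $\gamma_i$ is finite and I claim Player \textbf{II} still wins the subgame $\EFB^P_{\gamma_i}(\mathcal{D}_i,\mathcal{C}_2)$. By Corollary~\ref{c9}, Player \textbf{I} would need a separating quantifier-free $\LL$-formula $\phi$ of size $\leq\gamma_i<\omega$, which is necessarily first order and hence defines a clopen set $K\subseteq 2^\omega$; but if $(\mathcal{D}_i,\mathcal{C}_2)\models\phi$ then $\mathcal{C}_2\subseteq K^c$ is closed and contains the dense set $\mathcal{C}_2\cap[\sigma]$, forcing $[\sigma]\subseteq K^c$ and thus $\mathcal{D}_i\cap[\sigma]\subseteq K\cap[\sigma]=\emptyset$, contradicting the choice of $\mathcal{D}_i$ (the case $(\mathcal{D}_i,\mathcal{C}_2)\models\neg\phi$ is symmetric). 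So no separating formula exists; since the rank strictly decreases at every move and the game is therefore well-founded, determinacy hands Player \textbf{II} a winning strategy. The main obstacle is pinning down the ordinal arithmetic in the infinite-split case: $\omega^2$ is exactly the threshold below which Cantor normal forms force only finitely many big branches, so that Baire category delivers a finite-rank branch still meeting $[\sigma]$.
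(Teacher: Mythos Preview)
Your argument is correct and follows the same architecture as the paper's proof: maintain a density invariant, preserve it under finite splits by a Baire-category refinement, and exploit the key ordinal fact that below $\omega^2$ an infinite split can have only finitely many branches with rank $\geq\omega$.

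There are two points of divergence worth noting. First, the paper's invariant (condition~(\ref{e9})) additionally requires \emph{every} string in the current classes to extend the finite segment $f$; this is achieved by trimming via Corollary~\ref{c8}. Because of this, after an infinite split the paper can simply use pigeonhole---infinitely many non-empty pieces, finitely many big ranks---to find a non-empty piece with finite rank, without your Case~A/Case~B dichotomy or the Baire argument on the big branches. Your untrimmed invariant forces you to work a bit harder here, though the result is the same.

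Second, and more interestingly, once a finite-rank branch $(\mathcal{D}_i,\mathcal{C}_2,\gamma_i)$ is reached, the paper gives an explicit hands-on strategy for Player~\textbf{II}: shrink to a single string $h_A$ on the left, then after each right split pick the piece containing infinitely many of the approximants $h_m$ that agree with $h_A$ on longer and longer initial segments. You instead invoke determinacy of the (finite-length) subgame together with Corollary~\ref{c9}: a win for Player~\textbf{I} would produce a first-order propositional separator, hence a clopen set $K\supseteq\mathcal{D}_i$ disjoint from $\mathcal{C}_2$, and density of $\mathcal{C}_2$ in $[\sigma]$ then forces $[\sigma]\subseteq K^c$, contradicting $\mathcal{D}_i\cap[\sigma]\neq\emptyset$. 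This semantic shortcut is cleaner than the paper's explicit combinatorics and makes transparent why the threshold is exactly $\omega^2$. (Your parenthetical ``the case $(\mathcal{D}_i,\mathcal{C}_2)\models\neg\phi$ is symmetric'' is superfluous---if $\phi$ separates, the direction is fixed---but this is harmless.)
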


Together with Corollary \ref{c9}, this leads to the following theorem. 
\begin{theorem}
If $\P_1$ and $\P_2$ are dense properties of infinitary binary strings and $\phi$ is a propositional $\LL$-formula separating $\P_1$ and $\P_2$, then the size of $\phi$ is at least $\omega^2$.
\end{theorem}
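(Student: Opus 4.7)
The plan is to deduce the theorem immediately from Theorem \ref{t10} together with Corollary \ref{c9}, by a contradiction argument based on the fact that two players cannot both possess a winning strategy in the same game.

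Suppose for contradiction that $\phi$ is a propositional $\LL$-formula separating $\P_1$ and $\P_2$ with $\mathrm{s}(\phi) = \alpha < \omega^2$. First I would observe that since $\phi$ is quantifier-free and $(\P_1, \P_2) \models \phi$, Corollary \ref{c9} applies and delivers a winning strategy for player \textbf{I} in the propositional game $\EFB^P_\alpha(\P_1, \P_2)$. On the other hand, since $\P_1$ and $\P_2$ are dense and $\alpha < \omega^2$, Theorem \ref{t10} furnishes a winning strategy for player \textbf{II} in the same game $\EFB^P_\alpha(\P_1, \P_2)$.

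Then I would note that this is impossible: having simultaneous winning strategies for both players leads to a contradiction by a standard argument — if the two strategies are played against each other, the resulting run would have to be a win for both \textbf{I} and \textbf{II}, which is absurd. Hence $\mathrm{s}(\phi) \geq \omega^2$, as required.

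There is no real obstacle here; the theorem is a one-line corollary of the two cited results. The only minor subtlety worth mentioning in the write-up is that the appeal to Corollary \ref{c9} requires $\phi$ to be quantifier-free, which is guaranteed by the hypothesis that $\phi$ is a propositional formula, and that the game $\EFB^P_\alpha$ is naturally interpreted on classes of infinite binary strings viewed as models of the propositional vocabulary $\{p_i : i \in \omega\}$ as described just before Theorem \ref{t10}.
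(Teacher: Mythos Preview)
Your proposal is correct and matches the paper's approach exactly: the paper states this theorem immediately after Theorem \ref{t10} with the remark ``Together with Corollary \ref{c9}, this leads to the following theorem,'' leaving the easy deduction to the reader. Your write-up is precisely that deduction spelled out.
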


\begin{corollary}\label{c11}
If $\P$ is a string property such that both $\P$ and its complement are dense, then $\P$ cannot be defined by a propositional $\LL$-formula of size less than $\omega^2$.
\end{corollary}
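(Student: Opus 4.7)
The plan is to derive this as an immediate contrapositive of the theorem stated just above. Set $\P_1 = \P$ and $\P_2 = 2^\omega \setminus \P$. By hypothesis both of these classes are dense in the Cantor space, so they meet the assumption of the preceding theorem.

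Next I would unwind what it means for a propositional $\LL$-formula $\phi$ to \emph{define} $\P$. By definition $t \models \phi$ iff $t \in \P$, which is exactly the statement that $\phi$ separates $\P_1$ from $\P_2$ in the sense of the notation fixed in the introduction: every string in $\P_1$ satisfies $\phi$, and every string in $\P_2$ fails $\phi$. Thus a defining formula for $\P$ is the same thing as a formula witnessing $(\P_1,\P_2)\models\phi$.

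Now I apply the preceding theorem directly: since $\P_1$ and $\P_2$ are dense and $\phi$ separates them, $\mathrm{s}(\phi)\geq\omega^2$. Equivalently, no propositional $\LL$-formula of size strictly less than $\omega^2$ can separate $\P_1$ from $\P_2$, hence none can define $\P$. This is the desired conclusion. There is no real obstacle here: the corollary is purely a restatement of the theorem applied to the pair $(\P,2^\omega\setminus\P)$, and the only tiny thing worth being careful about is the observation that "definability" and "separation from the complement" are literally the same condition under the semantics $t\models p_i \iff t(i)=1$ fixed earlier in the section.
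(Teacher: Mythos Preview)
Your proposal is correct and matches the paper's intended argument: the corollary is stated immediately after the separation theorem with no proof given, precisely because it follows by taking $\P_1=\P$ and $\P_2=2^\omega\setminus\P$ and observing that a defining formula is a separating formula. There is nothing to add.
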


If a string property $\P$ is countable, then by Cantor diagonal argument its complement is dense. For a countable dense property $\P$ the standard formula $\Phi_\P$ is a minimal formula defining $\P$.

In the remaining of this section we give a proof of Theorem \ref{t10}. Let us first fix some notation. Let $2^{<\omega}$ denote the set of partial functions $\omega \rightarrow 2$ with a finite domain. We call a function $g\in 2^{<\omega}$ a \it{finite segment}. For an infinite binary string $h\in 2^\omega$ and a finite segment $g\in 2^{<\omega}$, we say that $h$ \it{agrees with} $g$, or that $h$ \it{extends} $g$, if $g\subset h$. A string property $\P$ is dense if and only if for any finite segment $g\in 2^{<\omega}$, there exists $h\in\P$ that extends $g$.

In the game EFB$^P_\alpha$ the players are not allowed to play supplementing moves, but can only play splitting moves. When player \bf{I} plays an infinite splitting move, he splits a class of structures $\mA$ (or $\mB$) into countably many pieces $\mA_i, i\in\omega$ (or $\mB_i, i\in \omega$). We say that an infinite splitting move is a \it{proper infinite splitting move} if there are infinitely many pieces $\mA_i$ (or $\mB_i$) that are non-empty. If player \bf{I} splits $\mA$ (or $\mB$) into infinitely many pieces among which only finitely many are non-empty, we call it a \it{degenerate infinite splitting move}\index{infinite splitting move!degenerate}. Strange as it may seem, there is nothing in the rules that forbids \bf{I} playing in this way. A degenerate infinite splitting move is closer in nature to a finite series of finite splitting moves. Henceforth we refer to degenerate infinite splitting move and finite series of finite splitting moves as \it{generalized finite splitting moves}.

Let $\alpha$ be an ordinal smaller than $\omega^2$. We describe the following strategy for player \bf{II} in the game \EFB$_\alpha^P(\P_1, \P_2)$. \newline

\bf{Player II's strategy}: Let $(\mA, \mB, \gamma)$ denote a game position. In each round, player \bf{II} makes sure that
\begin{equation}\label{e9}
\begin{split}
\exists f\in 2^{<\omega} 
\Bigg(&\forall h\in \mA  (h\supset f) \wedge \\
&\forall h \in \mB  (h\supset f)\wedge \\ 
&\forall g\in 2^{<\omega}\bigg( g\supset f\rightarrow \big(\exists h\in \mA (h\supset g) \wedge \exists h' \in \mB (h'\supset g)\big)\bigg)\Bigg).
\end{split}
\end{equation}
She maintains this condition until she sees an opportunity to finish off the game directly.\newline

Condition (\ref{e9}) says that at any stage of the game, we can always find a finite segment $f$ such that all strings in $\mA$ and $\mB$ agree with $f$. Moreover, for any finite extension $g$ of $f$, there are string $h$ and $h'$ in $\mA$ and $\mB$ respectively that agree with $g$. Therefore if we imagine we `cut off' the finite segment $f$, the remaining configuration in the game is almost identical to the starting position. We will specify later what counts as an opportunity for \bf{II} to win the game directly. 

Theorem \ref{t10} is established through a series of lemmas.

\begin{lemma}
Condition (\ref{e9}) is true in the starting position $(\P_1, \P_2, \alpha).$
\end{lemma}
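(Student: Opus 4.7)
The plan is to exhibit the empty finite segment $f=\emptyset\in 2^{<\omega}$ as the witness for the existential quantifier in Condition~(\ref{e9}) at the starting position $(\P_1,\P_2,\alpha)$.

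First I would observe that every infinite binary string $h\in 2^\omega$ trivially extends the empty function, so the first two conjuncts $\forall h\in\P_1(h\supset\emptyset)$ and $\forall h\in\P_2(h\supset\emptyset)$ are vacuously true. Then for the third conjunct, I would take an arbitrary finite segment $g\in 2^{<\omega}$ extending $\emptyset$ (i.e., any $g$ at all) and invoke the density of $\P_1$ and $\P_2$ in the Cantor space $2^\omega$ with the product topology. The paper has just noted that density of a property $\P$ is equivalent to saying that for every $g\in 2^{<\omega}$ there is some $h\in\P$ extending $g$ (since the basic open sets of the product topology on $2^\omega$ are precisely the cylinders determined by finite segments). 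Applying this to $\P_1$ and $\P_2$ separately yields strings $h\in\P_1$ and $h'\in\P_2$ with $h\supset g$ and $h'\supset g$, completing the verification.

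There is no real obstacle here; the lemma is essentially a restatement of density for the empty segment. The only thing worth spelling out is the translation between topological density and the combinatorial statement about finite segments, but this has already been made explicit in the paragraph preceding the lemma, so it can be invoked directly.
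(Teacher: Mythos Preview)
Your proposal is correct and takes exactly the same approach as the paper: both choose the empty function $f=\emptyset$ as the witness and observe that Condition~(\ref{e9}) then reduces to the density of $\P_1$ and $\P_2$. Your write-up is simply a more detailed unpacking of the paper's one-line proof.
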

\begin{proof}
Let $f$ be the empty function. Then condition (\ref{e9}) is just the density condition. 
\end{proof}

\begin{lemma}\label{l14}
If condition (\ref{e9}) is true in position $(\mA, \mB, \gamma)$, then no atomic formula or its negation separates $\mA$ and $\mB$.
\end{lemma}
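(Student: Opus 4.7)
The plan is a direct case analysis: suppose, toward a contradiction, that some atomic or negated atomic formula separates $\mA$ and $\mB$. In the propositional vocabulary this must be $p_i$ or $\neg p_i$ for some $i\in\omega$. By the obvious symmetry (swap the roles of $\mA$ and $\mB$, and of $p_i$ and $\neg p_i$) it is enough to treat one of the four combinations, say the case $(\mA,\mB)\models p_i$, which unpacks as: every $h\in\mA$ has $h(i)=1$ and every $h'\in\mB$ has $h'(i)=0$.

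Let $f$ be the finite segment supplied by condition (\ref{e9}). I would split on whether $i\in\mathrm{dom}(f)$. If $i\in\mathrm{dom}(f)$, then the first two conjuncts of (\ref{e9}) force every string in $\mA\cup\mB$ to take the value $f(i)$ at coordinate $i$; but separation by $p_i$ requires the value $1$ on one side and $0$ on the other. The third conjunct applied to $g=f$ guarantees $\mA$ and $\mB$ are both nonempty, so this is an outright contradiction.

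If $i\notin\mathrm{dom}(f)$, I would explicitly construct a bad witness: let $g\supset f$ be the finite segment that additionally sets $g(i)=0$. Then $g$ is a finite extension of $f$, so by the third conjunct of (\ref{e9}) there is some $h\in\mA$ with $h\supset g$; in particular $h(i)=0$, contradicting $(\mA,\mB)\models p_i$. The other three combinations are handled identically, taking $g(i)=1$ when the putative separating formula is $\neg p_i$, and switching to a witness in $\mB$ when the separation asserts $(\mA,\mB)\models \neg p_i$ or one is reasoning about the $\mB$-side.

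There is no real obstacle here; the argument is essentially a one-line unpacking of the three conjuncts of (\ref{e9}). The only thing to be slightly careful about is the edge case $i\in\mathrm{dom}(f)$, where the contradiction comes from the density clause ensuring nonemptiness rather than from producing an explicit witness.
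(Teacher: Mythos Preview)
Your proof is correct and follows essentially the same route as the paper's. The paper also fixes the witnessing segment $f$ and does a case split on the position of the relevant coordinate relative to $\mathrm{dom}(f)$; it phrases this as three cases (the singleton segment $g$ coding the literal is contained in $f$, disagrees with $f$, or has domain disjoint from $\mathrm{dom}(f)$), whereas you collapse the first two into the single case $i\in\mathrm{dom}(f)$ and handle the disjoint case by explicitly extending $f$ at coordinate $i$, exactly as the paper does. Your explicit remark that the third conjunct applied to $g=f$ yields nonemptiness of $\mA$ and $\mB$ is a small point the paper leaves implicit.
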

\begin{proof}
There is an atomic formula or its negation separating $\mA$ and $\mB$ if and only if there exists $g\in 2^{<\omega}$ such that $|\textrm{dom}(g)|=1$, $g\subset h$ for all $h\in \mA$, and $g\not\subset h'$ for all $h'\in \mB$. We claim that condition (\ref{e9}) guarantees that this situation does not occur. There are the following three possibilities for $g$, let us consider them one by one. Firstly, if $g\subset f$, then by condition (\ref{e9}) all strings in $\mA$ and $\mB$ agree with $g$. Secondly, if $g$ disagrees with $f$, namely if dom($g$)$\subset$dom($f$) yet $g\not\subset f$, then by condition (\ref{e9}) again all strings in $\mA$ and $\mB$ disagree with $g$. Finally, suppose $\textrm{dom}(g)\cap \textrm{dom}(f)=\emptyset$. The finite segment $g\cup f$ is an extension of $f$. Therefore by condition (\ref{e9}) there are $h\in \mA$ and $h'\in \mB$ that agree with $g\cup f$ respectively. In particular they agree with $g$. 
\end{proof}

\begin{lemma}\label{l15}
If player \bf{I} makes a generalized finite splitting move, player \bf{II} can maintain condition (\ref{e9}) to the next round. 
\end{lemma}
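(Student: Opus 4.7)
The plan is to reduce the statement to a single finite binary split and then carry out a density/tree argument on the finite extensions of $f$.

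\textbf{Reduction.} A degenerate infinite left splitting move partitions $\mA$ into countably many pieces of which only finitely many are non-empty, say $\mA_{i_1}, \ldots, \mA_{i_k}$; player II's choice among these is equivalent to $k-1$ successive binary finite splits, so iterating the binary-split response handles this case. A finite series of finite splitting moves is treated likewise by iterating II's response at each round. Accordingly I would focus on a single finite left split $\mA = \mA_1 \cup \mA_2$ (the right split is dual), where $f$ denotes the witness of condition (\ref{e9}) at the current position $(\mA, \mB, \gamma)$.

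\textbf{Key tree argument.} For each finite extension $g \supseteq f$ and each $j \in \{1,2\}$, call $g$ ``$j$-rich'' if $\mA_j$ contains a string extending $g$. Condition (\ref{e9}) applied to $\mA$ says every $g \supseteq f$ is $1$-rich or $2$-rich (or both). The set of non-$j$-rich nodes is downward closed in the tree of extensions of $f$: if $g$ is non-$j$-rich and $g' \supseteq g$, no extension of $g'$ lies in $\mA_j$. Hence below any non-$1$-rich node every node is $2$-rich, so $\mA_2$ is dense above that node. This gives two cases: either $\mA_1$ is already dense above $f$, in which case II picks $\mA_1$ with new witness $f' = f$; or there is a minimal non-$1$-rich $g_1 \supseteq f$, in which case II picks $\mA_2$ with new witness $f' = g_1$. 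The mirror case when $\mA_2$ fails to be dense above $f$ is analogous.

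\textbf{Verification and main obstacle.} One then checks the three conjuncts of condition (\ref{e9}) at $(\mA_i, \mB)$ with witness $f'$. Density of $\mA_i$ above $f'$ holds by the construction above, and density of $\mB$ above $f'$ follows from density of $\mB$ above $f$ since $f' \supseteq f$. The common-prefix requirements---that every string of $\mA_i$ and of $\mB$ extends $f'$---are immediate when $f' = f$, since $\mA_i \subseteq \mA$ and both $\mA, \mB$ already extended $f$. The main obstacle is the case $f' = g_1 \supsetneq f$: here the fact that every string of $\mA_i$ and every string of $\mB$ extends $f'$ is no longer automatic. I expect this to be the delicate step, and would handle it by showing that whenever the first (easy) case fails, the only feasible witness $f'$ is compatible with $\mB$'s common prefix because $\mB$'s density over $f$ combined with the structure of $\mA_1 \cup \mA_2$ forces $g_1$ to be a proper extension only along coordinates where $\mB$ (being everywhere dense above $f$) actually splits as well---thus reducing the situation to a common refinement of $f$ that still witnesses (\ref{e9}) for both sides.
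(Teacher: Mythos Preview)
Your density argument locating the right piece $\mA_k$ and witness $f'$ is essentially the paper's argument (the paper does it directly for $n$ pieces rather than reducing to binary, but the idea is identical). You have also correctly isolated the genuine obstacle: when $f' \supsetneq f$, the first two conjuncts of condition~(\ref{e9}) --- that every string of $\mA_k$ and every string of $\mB$ extend $f'$ --- need not hold.

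The gap is in your proposed resolution of that obstacle. Your sketch (``$\mB$'s density over $f$ \ldots\ forces $g_1$ to be a proper extension only along coordinates where $\mB$ \ldots\ actually splits as well'') does not work, and in fact the conclusion you are aiming for is simply false. Density of $\mB$ above $f$ means $\mB$ meets every cylinder above $f$; it says nothing about all of $\mB$ lying in any one such cylinder. Concretely, take $f=\emptyset$, $\mA=\mB=\P$ any dense property, and split $\mA$ by the value at coordinate $0$: $\mA_1=\{h\in\mA: h(0)=0\}$, $\mA_2=\{h\in\mA: h(0)=1\}$. Neither $\mA_j$ is dense above $f$, so you are forced into $f'\supsetneq f$, yet $\mB$ contains strings with $h(0)=0$ and strings with $h(0)=1$; no choice of $f'$ is extended by every string of $\mB$. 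The same example shows $\mA_k$ itself may contain strings not extending $f'$.

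The missing idea is monotonicity. After choosing $\mA_k$ and $f'$, replace $\mA_k$ by $\mA'=\{h\in\mA_k: h\supset f'\}$ and $\mB$ by $\mB'=\{h\in\mB: h\supset f'\}$. By Corollary~\ref{c8} (contrapositively: a winning strategy for \textbf{II} on the shrunken pair transfers to the larger pair), it suffices that \textbf{II} win from $(\mA',\mB',\gamma_k)$. Now all three conjuncts of~(\ref{e9}) hold trivially with witness $f'$: the first two by construction, and the third because $\mA_k$ was chosen dense above $f'$ and $\mB$ was already dense above $f\subset f'$. This ``cosmetic surgery'' is exactly how the paper closes the argument; once you insert it, your proof is complete.
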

\begin{proof}
Suppose the game is in position $(\mA, \mB, \gamma)$ and condition (\ref{e9}) holds. Suppose player \bf{I} makes a generalized finite left splitting move. He represents $\mA$ as a union $\mA_1\cup\ldots\cup \mA_n$ of $n$ pieces. He also picks ordinals $\gamma_1,\ldots, \gamma_n$ such that $\gamma_1\#\ldots\#\gamma_n\leq\gamma$. We claim that there must exist a finite segment $f'\supset f$ and a piece $\mA_k$ such that 
\begin{equation}\label{e10}
\forall g\in 2^{<\omega}\big( g\supset f'\rightarrow \exists h\in \mA_k (h\supset g)\big). 
\end{equation}
Suppose this is not the case for $\mA_1,\ldots, \mA_{n-1}$. Then we have
\begin{equation}\label{e11}
 \forall f'\supset f \exists g\in 2^{<\omega}\bigg(g\supset f' \wedge \forall h\in 2^\omega (h\supset g\rightarrow h\notin \mA_i) \bigg)
\end{equation}
for $i=1,\ldots, n-1$. Now let $f'=f$, apply condition (\ref{e11}) to $\mA_1$:
\[ \exists g_1\in 2^{<\omega}\bigg(g\supset f \wedge \forall h\in 2^\omega\big(  h\supset g_1\rightarrow h\notin \mA_1\big)\bigg).\]
This means that
\[ \exists g_1\in 2^{<\omega}\bigg(g\supset f \wedge \forall h\in 2^\omega \big( (h\supset g_1\wedge h\in \mA )\rightarrow h\in \mA_2\cup\ldots\cup \mA_n\big)\bigg).\]
Now let $f'=g_1$, apply condition (\ref{e11}) to $\mA_2$:
\[ \exists g_2\in 2^{<\omega}\bigg(g_2\supset g_1\wedge \forall h\in 2^\omega \bigg( (h\supset g_2 \wedge h\in \mA)\rightarrow h\in \mA_3\cup\ldots\cup \mA_n\big)\bigg).\]
Iterate this process, we get:
\[ \exists g_{n-1}\in 2^{<\omega}\bigg(g_{n-1}\supset g_{n-2} \forall h\in 2^\omega\bigg( (h\supset g_{n-1}\wedge h\in \mA)\rightarrow h\in \mA_n\big)\bigg).\]
This means that condition (\ref{e10}) is true for $\mA_n$ when we take $g_{n-1}$ for $f'$. Therefore the previous claim is true.

Now let player \bf{II} pick this piece $\mA_k$ and the corresponding ordinal $\gamma_k$. What remains to be done is a cosmetic surgery on $\mA_k$ and $\mB$. We throw away the strings that do not agree with $f'$. Let
\[ \mA'=\{h\in\mA_k|h\supset f'\}
\]
and let
\[\mB'=\{h'\in\mB|h'\supset f'\}.
\]
By Corollary \ref{c8}, if player \bf{II} has a winning strategy in this new position, she also has one in the old position. It is straightforward to check that condition (\ref{e9}) holds for $\mA'$ and $\mB'$. Let the game continue from the position $(\mA', \mB', \gamma_k)$. 

The case where \bf{I} plays a generalized finite right splitting move is similar.
\end{proof}

\begin{lemma}\label{l16}
Suppose condition (\ref{e9}) is true in position $(\mA, \mB, \gamma)$ with $\gamma<\omega^2$. If player \bf{I} makes a proper infinite splitting move, then player \bf{II} has a winning strategy in the rest of the game. 
\end{lemma}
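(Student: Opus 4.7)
The plan is to exploit the bound $\gamma<\omega^2$ to force most pieces in I's splitting to have finite rank, and then to reduce to a sub-game where only finite splittings are possible and II can maintain a weakened form of condition (\ref{e9}).

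First I establish the key arithmetic fact: any sequence $\{\gamma_i\}_{i\in\omega}$ of non-zero ordinals with $\S\gamma_i\leq\gamma<\omega^2$ contains only finitely many $\gamma_i\geq\omega$. Indeed, collecting $m$ such indices yields a partial natural sum of at least $\omega\cdot m$ (because the natural sum dominates the ordinary sum), so if infinitely many $\gamma_i$ were $\geq\omega$ we would force $\S\gamma_i\geq\omega^2$. Combined with properness (infinitely many pieces are non-empty), player II can pick an index $k$ with $\gamma_k<\omega$ and, WLOG in the left-splitting case, $\mA_k$ non-empty. The game continues at $(\mA_k,\mB,\gamma_k)$, where both classes consist of extensions of the common segment $f$ from (\ref{e9}), $\mB$ is still dense above $f$ (unchanged by the split), and $\mA_k$ is non-empty though possibly no longer dense.

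Next, I prove by induction on the finite ordinal $\gamma'$ that player II has a winning strategy from any position $(\mA',\mB',\gamma')$ where $\mA'$ is non-empty, both classes extend some $f^*$, $\mB'$ is dense above $f^*$, and $\gamma'<\omega$. The base $\gamma'=1$ is immediate: no atomic or negated atomic formula separates, because on positions in $\mathrm{dom}(f^*)$ both sides are pinned to $f^*$, while on positions outside $\mathrm{dom}(f^*)$ the density of $\mB'$ supplies strings of both values. For the inductive step, player I cannot play an infinite splitting (the infinite natural sum of non-zero ordinals is at least $\omega>\gamma'$), so he plays a finite split. A finite left split is handled by picking the piece containing a fixed $h\in\mA'$: it is non-empty and preserves the invariant at $f^*$. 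For a finite right split $\mB'=\mB'_1\cup\mB'_2$, a refinement of the combinatorial argument in the proof of Lemma \ref{l15} produces a prefix $f^{**}\subseteq h$ extending $f^*$ above which some $\mB'_j$ is dense; player II picks this $\mB'_j$ and appeals to Corollary \ref{c8} to reduce to the shrunken pair $(\mA^*,\mB^*_j,\gamma'_j)$, where $\mA^*=\{h'\in\mA':h'\supset f^{**}\}$ contains $h$ and $\mB^*_j=\{h'\in\mB'_j:h'\supset f^{**}\}$ is dense above $f^{**}$. The invariant holds at $f^{**}$ for this shrunken pair, and the IH on $\gamma'_j<\gamma'$ closes the induction. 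The symmetric case in which I's first move is a proper infinite right splitting is handled by the dual invariant in which $\mA$ plays the role of the dense side.

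The main obstacle is the refinement of Lemma \ref{l15}'s combinatorial content used in the right-splitting step: given $\mB'$ dense above $f^*$ and a partition $\mB'=\mB'_1\cup\mB'_2$, one needs some $\mB'_j$ dense above a prefix of the prescribed $h\in\mA'$, not merely above some arbitrary extension of $f^*$. This is proved by iterating the L15 exclusion argument restricted to the prefixes of $h$: if neither $\mB'_j$ were dense above any such prefix, then successive exclusions extending along $h$ would eventually exclude all of $\mB'$ from some finite extension of $f^*$, contradicting $\mB'$'s density above $f^*$. Choosing the right invariant is the subtle point — (\ref{e9}) is too strong to survive the initial proper infinite split, but dropping all density breaks the rank-$1$ base case, so one keeps one-sided density together with non-emptiness of the other side and uses Corollary \ref{c8} to shrink to a common extension whenever a split on the dense side requires it.
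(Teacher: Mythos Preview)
Your high-level plan matches the paper's: exploit $\gamma<\omega^2$ to conclude only finitely many $\gamma_i$ are $\geq\omega$, choose a non-empty piece with finite rank, and then handle the remaining finite-rank game with a suitable invariant. The arithmetic step is correct and identical to the paper's.

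The gap is in your invariant for the finite-rank subgame, specifically the right-splitting step. You claim that if $\mB'$ is dense above $f^*$ and $\mB'=\mB'_1\cup\mB'_2$, then some $\mB'_j$ is dense above a finite segment $f^{**}$ with $f^*\subseteq f^{**}\subseteq h$. This is false. Take $f^*=\emptyset$, let $h$ be the constant-zero string, let $\mB'=2^\omega\setminus\{h\}$, and split by the parity of the position of the first $1$: $\mB'_1=\{t:\text{first }1\text{ at an even position}\}$, $\mB'_2=\{t:\text{first }1\text{ at an odd position}\}$. Any finite $f^{**}\subseteq h$ is all zeros on a finite domain; extend it by further zeros and then a single $1$ at a position of prescribed parity, and every completion lands in whichever piece you wished to avoid. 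Hence neither $\mB'_j$ is dense above any $f^{**}\subseteq h$. The exclusion argument from Lemma~\ref{l15} does not rescue this, because the witnessing segments $g_i$ it produces are arbitrary extensions of $f^*$ and need not lie inside $h$; your phrase ``restricted to the prefixes of $h$'' hides exactly this difficulty.

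The paper sidesteps the problem by using a weaker invariant. After reducing $\mA_k$ to a singleton $\{h_A\}$ via Corollary~\ref{c8}, it maintains only that the current $\mB$-piece contains, for infinitely many $m$, a string $h_m$ agreeing with $h_A$ on the first $k+m$ positions (where $k=\sup(\mathrm{dom}(f))$). This survives every finite right split by pigeonhole---infinitely many such $h_m$ distributed among finitely many pieces forces one piece to retain infinitely many---and it already suffices to block separation by a literal, since for any position $i$ some retained $h_m$ agrees with $h_A$ at $i$. Your density invariant is strictly stronger and, as the example shows, not preserved; replacing it with this ``accumulation toward $h_A$'' condition repairs your argument and makes it coincide with the paper's.
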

\begin{proof}
Suppose the game is in position $(\mA, \mB, \gamma)$ and condition (\ref{e9}) is true. Suppose player \bf{I} makes a proper infinite splitting move. He represents $\mA$ as a union $\bigcup_{i\in \omega} \mA_i$, among which there are infinitely many non-empty pieces. He also chooses ordinals $\gamma_i, i\in \omega$ such that $\S\gamma_i\leq\gamma< \omega^2$. Note that there can only be finitely many $i\in\omega$ such that $\gamma_i$ is an infinite ordinal, for otherwise the natural sum $\S\gamma_i$ would be greater or equal to $\omega^2$, which is a contradiction. On the other hand there are infinitely many $i\in \omega$ such that $\mA_i$ is non-empty. Therefore there must exist $k\in \omega$ such that $\mA_k$ is non-empty and $\gamma_k$ is finite. 

If this situation occurs, we ask player \bf{II} to jump out of the strategy of maintaining condition (\ref{e9}) and go on to win the game directly. Let player \bf{II} pick this piece $\mA_k$ and $\gamma_k$. The game continues from the position $(\mA_k, \mB, \gamma_k)$. Apply Corollary \ref{c8} and suppose $\mA_k$ consists of a single string $h_A$. Note that since $\gamma_k$ is a finite number, in the rest of the game \bf{I} can only make finite right splitting moves. 

Suppose \bf{I} makes a finite right splitting move. He represents $\mB$ as a union $\mB_1\cup\ldots\cup\mB_n$ of $n$ pieces. Consider the witnessing finite segment $f$ for condition (\ref{e9}). Let $k=\sup(\mathrm{dom}(f))$. For any natural number $m$, let $h_m$ be the string in $\mB$ such that $h_m$ agrees with $h_A$ on the first $k+m$ elements of $\omega$. By condition (\ref{e9}) this string always exists. In other words, we have
\begin{equation}
\forall m\in \omega \ \exists h_m\in \mB \big(h_m\upharpoonright (k+m)=h_A\upharpoonright (k+m)\big).
\end{equation}
Among the $n$ pieces $\mB_1,\ldots, \mB_n$, there must be one piece $\mB_l$ that contains $h_m$ for infinitely many $m\in \omega$. Let player \bf{II} pick this piece $\mB_l$. She uses the same strategy if \bf{I} makes another finite right splitting move. She makes sure that at any stage of the remaining game, the piece $\mB_r$ at hand always contain $h_m$ for infinitely many $m\in \omega$. This guarantees that no atomic formula separates $\mA_k$ and $\mB$. Player \bf{II} wins when the game terminates after finitely many rounds. 
\end{proof}

Now we can see that the strategy prescribed above is indeed a winning strategy for player \bf{II}. Player \bf{II} hangs on in the game by maintaining condition (\ref{e9}). If her opponent plays a proper infinite splitting move at some point, she goes on to win the rest of the game by appealing to Lemma \ref{l16}. Otherwise she continues with condition (\ref{e9}). This is possible because of Lemma \ref{l15}. This strategy keeps her away from losing by Lemma \ref{l14}. Player \bf{II} will eventually prevail after finitely many rounds. Thus we have finished the proof of Theorem \ref{t10}.

The following are some examples of dense properties of infinite binary strings. By Corollary \ref{c11} none of these properties can be defined by a propositional $\LL$-formula of size less than $\omega^2$.

\begin{example}
\begin{enumerate}
\item $\P_1=\{f\in 2^\omega|f^{-1}(1)  \textrm{ is finite}\}$  `$f$ has finitely many ones'.

\item $\P_2=\{f\in 2^\omega||f^{-1}(1)|  \textrm{ is an odd number}\}$ `$f$ has an odd number of ones'.

\item $\P_3=\textrm{`$f$ codes a rational number'}$.

\item $\P_4=\textrm{`$f$ codes a decidable set'}$.
\end{enumerate}
\end{example}

This result is a first step in the application of EFB$_\alpha$. The game suggests many natural questions about the expressive power of $\LL$, for example:

\medskip
\noindent
\bf{Open Problems:\hspace{2mm}}
\begin{enumerate}

\item Can we use the game to prove higher complexity results for propositional $\LL$?

\item Can we find applications of the full game instead of the propositional fragment? 

\item Can we prove complexity results for other nice complexity measures? 

\end{enumerate}
These remain questions for further study.

Finally, we point out that new problems arise when we move from $\LL$ to $L_{\omega_2\omega}$ and even higher infinitary languages. Suppose we want to define the notion of size for $L_{\omega_2\omega}$-formulas in such a way that it is coherent with our definition of size for $\LL$-formulas. To illustrate the problem, consider the uncountable conjunction $\Phi=\bigwedge_{i\in\omega_1}\phi_i$, where $\phi_i$ is an atomic formula for all $i\in\omega_1$. What is the right size of $\Phi$? There are the following three desiderata:
\begin{enumerate}
\item If $\theta$ is an $\LL$-formula, then the size of $\theta$ is the same as it is given in Definition \ref{d8}.
\item The size of $\Phi$ is the supremum of the size of its proper subformulas, namely $\mathrm{s}(\Phi)=\sup_{\alpha<\omega_1}\mathrm{s}(\bigwedge_{i\in\alpha}\phi_i)$.
\item The size of $\Phi$ is at least $\omega_1$.
\end{enumerate}
It turns out that these three conditions cannot be satisfied simultaneously. Suppose there is a complexity measure $\mathrm{s}$ satisfying conditions 1 and 2.  Consider any infinite ordinal $\alpha<\omega_1$. By condition 1, we have that $\mathrm{s}(\bigwedge_{i\in\alpha}\phi_i)=\mathrm{s}(\bigwedge_{i\in\omega}\phi_{p(i)})=\omega$, where $p$ is a bijection from $\omega$ to $\alpha$. This together with condition 2 renders the size of $\Phi$ to be $\omega$, contrary to condition 3.

\bibliography{EF.bib}
\printindex

\end{document}